\numberwithin{equation}{section}
 \newtheorem{theorem}{Theorem}[section]
 \newtheorem{lemma}[theorem]{Lemma}
\def\3bar{{|\hspace{-.02in}|\hspace{-.02in}|}}
\def\E{{\mathcal{E}}}
\def\T{{\mathcal{T}}}
\def\cal#1{{\mathcal #1}}
\def\bn{{\mathbf{n}}}
\def\ljump{{[\![}}
\def\rjump{{]\!]}}
\newtheorem{algorithm}{Weak Galerkin Algorithm}[section]
\numberwithin{equation}{section}
\def\3bar{{|\hspace{-.02in}|\hspace{-.02in}|}}
 \def\cal#1{\mathcal{#1}}
\def\ad#1{\begin{aligned}#1\end{aligned}}   
\def\a#1{\begin{align*}#1\end{align*}} \def\an#1{\begin{align}#1\end{align}}
\begin{document}

\title[WG finite element]
 {Weak Galerkin Finite Element Methods for Optimal Control Problems Governed by   Second Order Elliptic Partial Differential Equations}

\author {Chunmei Wang}
\address{Department of Mathematics, University of Florida, Gainesville, FL 32611, USA. }
\email{chunmei.wang@ufl.edu}
\thanks{The research of Chunmei Wang was partially supported by National Science Foundation Grants DMS-2136380 and DMS-2206332.}

\author {Junping Wang}
\address{Division of Mathematical Sciences, National Science Foundation, Alexandria, VA 22314, USA. }
\email{jwang@nsf.gov} 
\thanks{The research of Junping Wang was supported by the NSF IR/D program, while working at National Science Foundation. However, any opinion, finding, and conclusions or recommendations
expressed in this material are those of the author and do not
necessarily reflect the views of the National Science Foundation.}

\author { Shangyou Zhang }
\address{Department of Mathematical
            Sciences, University
     of Delaware, Newark, DE 19716, USA. }
\email{szhang@udel.edu }

\begin{abstract}

This paper is concerned with the development of weak Galerkin (WG) finite element method for optimal control problems governed by   second order elliptic partial differential equations (PDEs). It is advantageous to use discontinuous finite elements over the traditional $C^1$
  finite elements here.  Optimal order error estimates are established and confirmed by
  some numerical tests.
  
\end{abstract}

\keywords{weak Galerkin, finite element methods, optimal control.}

\subjclass[2010]{65N30, 65N15, 65N12, 65N20}
 
\maketitle

\section{Introduction}

Let $\Omega\subset \mathbb R^2$ be an open bounded domain with Lipschitz boundary $\partial\Omega$ which is decomposed into a homogeneous Neumann part $\Gamma_N$ and a control part $\Gamma_C$ on which the control acts ($\partial\Omega=\Gamma_C\cup\Gamma_N$),
\begin{equation}\label{model}
    \begin{split}
        -\Delta u+u&=f, \qquad \text{in} \ \Omega,\\
        \partial_n u&=0, \qquad\text{on}
    \ \Gamma_N, \\
        \partial_n u&=q, \qquad\text{on}
    \ \Gamma_C.
    \end{split}
\end{equation}
The observations are given on a part $\Gamma_O$ of the boundary and the associated cost functional is 
\begin{equation}\label{model2}
 \frac{1}{2}\|u-c_0\|^2_{\Gamma_O}+\frac{\alpha}{2}\| q\|^2_{\Gamma_C},   
\end{equation}
with a regularization parameter $\alpha>0$. 

 
The model problem \eqref{model}-\eqref{model2} can be reformulated to the following constrained minimization problem; i.e.,
\begin{equation}\label{mini}
\min \left(\frac{1}{2}\|u-c_0\|^2_{\Gamma_O}+\frac{\alpha}{2}\| \partial_n u\|^2_{\Gamma_C}\right),
\end{equation}  
subject to 
\begin{equation}\label{constr}
    \begin{split}
        -\Delta u+u&=f, \qquad \text{in} \ \Omega,\\
        \partial_n u&=0, \qquad\text{on}
    \ \Gamma_N.
    \end{split}
\end{equation}

Optimal control problems governed by second-order elliptic partial differential equations (PDEs) have numerous applications across various scientific and engineering domains. In engineering disciplines such as structural engineering, optimal control problems can be used to design structures that meet specific criteria for strength, stability, and safety. For example, optimizing the shape of an aircraft wing to minimize drag while maintaining structural integrity. In fluid dynamics, optimal control problems can be applied to optimize fluid flows in scenarios such as designing efficient airfoil shapes, minimizing turbulence, or optimizing chemical reactions in flow systems. In medical imaging, optimizing the parameters of imaging devices or processes can improve the quality of images while minimizing exposure to radiation. In environmental science, Optimal control can be used to model and manage environmental processes, such as groundwater contamination remediation, where the goal is to optimize the distribution of control agents to minimize the spread of pollutants. In material engineering, optimizing the properties of materials (e.g., electrical conductivity, mechanical strength) subject to physical constraints can have applications in developing new materials for specific applications. The common thread in optimal control is to find the best control strategies to achieve specific goals while considering the underlying dynamics governed by these equations. 

There is a growing literature (see an incomplete list \cite{boppss, bs,bsg, bsz, bsz2, bsz3, cmv, dh, gt, wy, hpuu, lgy, meyer, npr}) on finite element methods for the optimal control problems governed by partial differential equations. The weak Galerkin (WG) method, first introduced for second-order elliptic problems \cite{wy}, provides a natural extension of the classical finite element method through a relaxed regularity of the approximating functions. This novelty provides a high flexibility in numerical approximations with any needed accuracy and 
mesh generation being general polygonal or polyhedral partitions.  
To the best of our knowledge, there are not any results on weak Galerkin for optimal control problem governed by PDEs. In this paper, we shall develop a weak Galerkin method for optimal control problems governed by second order elliptic problems. We shall  prove the optimal order of error estimates in the energy norm and the $L^2$ norm. Numerical results will verify the established theoretical results.

This paper is structured as follows. Section 2 provides a weak formulation of the model problem \eqref{model}-\eqref{model2}.
In Section 3, we briefly review of the definition of the weak Laplacian and its discrete version. In Section 4, we present the weak Galerkin scheme and derive the existence and uniqueness of the solution. Stability analysis is provided in Section 5. Error equations for the proposed weak Galerkin scheme are derived in Section 6. Section 7 focuses on deriving the error estimate for the numerical approximation in the energy norm. Section 8 is devoted to establishing the error estimate for the numerical approximation in the $L^2$ norm. Finally, in Section  9, we present a series of numerical results to validate the theoretical developments presented in the previous sections.

The standard notations are adopted throughout this paper. Let $D$ be any open bounded domain with Lipschitz continuous boundary in $\mathbb{R}^d$. We use $(\cdot,\cdot)_{s,D}$, $|\cdot|_{s,D}$ and $\|\cdot\|_{s,D}$ to denote the inner product, semi-norm and norm in the Sobolev space $H^s(D)$ for any integer $s\geq0$, respectively. For simplicity, the subscript $D$ is  dropped from the notations of the inner product and norm when the domain $D$ is chosen as $D=\Omega$. For the case of $s=0$, the notations $(\cdot,\cdot)_{0,D}$, $|\cdot|_{0,D}$ and $\|\cdot\|_{0,D}$ are simplified as $(\cdot,\cdot)_D$, $|\cdot|_D$ and $\|\cdot\|_D$, respectively. The notation ``$A\lesssim B$'' refers to the inequality ``$A\leq CB$'' where $C$ presents a generic constant independent of the meshsize or the functions appearing in the inequality.

\section{Weak Formulation}
Introducing a Lagrange multiplier \ $\lambda$ \ and applying \ Euler-Lagrange \ method to the constrained  minimization problem \eqref{mini}-\eqref{constr}, we obtain the weak formulation that seeks 
$u\in H^2_{0,\Gamma_N}(\Omega)$ and $\lambda \in L^2(\Omega)$ satisfying
\begin{equation}\label{weakform}
    \begin{split}
        \langle u-c_0, v\rangle_{\Gamma_O}+\alpha \langle \partial_n u, \partial_n v\rangle_{\Gamma_C}+(-\Delta v+v, \lambda)= 0,  \forall  v\in  H^2_{0,\Gamma_N}(\Omega), 
     \end{split}
\end{equation}
\begin{equation}\label{weakform2}
    \begin{split}      
    (-\Delta u+u, w)= (f, w),\qquad \forall w \in L^2(\Omega),
    \end{split}
\end{equation}
where $ 
H^2_{0,\Gamma_N}(\Omega)=\{v\in H^2(\Omega): \partial_n v|_{\Gamma_N}=0\}$.

\begin{lemma}\label{unilem} \cite{gt} Assume that $\Omega$ is an open bounded and connected domain in $\mathbb R^2$ with Lipschitz continuous boundary $\partial\Omega$. Assume that   $\Gamma_O$ is a non-trivial portion of $\partial\Omega$.  Then,  the solutions of the following  problem, if they exist, are unique  
\begin{equation*}
\begin{split}
-\Delta u+u=&0, \qquad\mbox{\ in}\quad \Omega,\\
  u=&0,\qquad \mbox{on}\quad   \Gamma_O,\\
 \partial_\bn u =&0, \qquad\mbox{on}\quad  \partial\Omega. 
\end{split}
\end{equation*}
\end{lemma}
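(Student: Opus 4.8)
The plan is to prove uniqueness by a standard energy/unique-continuation argument. Suppose $u_1$ and $u_2$ are both solutions of the stated overdetermined boundary value problem; by linearity, the difference $u = u_1 - u_2$ satisfies $-\Delta u + u = 0$ in $\Omega$, $u = 0$ on $\Gamma_O$, and $\partial_{\bn} u = 0$ on all of $\partial\Omega$. The goal is to show $u \equiv 0$. First I would test the equation $-\Delta u + u = 0$ with $u$ itself and integrate by parts over $\Omega$; since $\partial_{\bn} u = 0$ on the entire boundary, the boundary term vanishes, giving $\|\nabla u\|^2_{\Omega} + \|u\|^2_{\Omega} = 0$. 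This already forces $u \equiv 0$ in $\Omega$.

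Wait — I should be careful about why the energy identity closes so cleanly here: the zero-order term $+u$ (rather than a pure Laplacian) is exactly what makes the argument immediate, because then the bilinear form $a(u,u) = \|\nabla u\|^2 + \|u\|^2$ is coercive with no need to quotient out constants and no need to invoke the Dirichlet condition on $\Gamma_O$ at all. So in fact the $\Gamma_O$ hypothesis is not needed for \emph{this} particular elliptic operator; the Neumann-everywhere condition alone suffices. (The reference to $\Gamma_O$ being non-trivial would matter for the pure Laplacian $-\Delta u = 0$, where $\partial_{\bn}u=0$ on $\partial\Omega$ only gives $u = $ const, and then $u=0$ on $\Gamma_O$ pins down the constant.) I would therefore present the integration-by-parts computation as the core step and remark that the $\Gamma_O$ condition is available if needed but not essential for the zero-order term present.

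The only genuine technical point is justifying the integration by parts. Since a priori a ``solution'' is only assumed to exist in $H^2(\Omega)$ (or even just with enough regularity for the PDE and boundary traces to make sense), Green's formula $\int_\Omega (-\Delta u)\, u = \int_\Omega |\nabla u|^2 - \int_{\partial\Omega} (\partial_{\bn} u)\, u$ holds for $u \in H^2(\Omega)$ on a Lipschitz domain, and the boundary integral is legitimate via the trace theorem. So I would note that the solution class is understood to have the regularity making these manipulations valid — consistent with the functional-analytic setting of the paper where $u \in H^2_{0,\Gamma_N}(\Omega)$.

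I do not expect any serious obstacle; the main thing to get right is the bookkeeping on the sign of the boundary term and making the statement about which hypotheses are actually used. Concretely, the write-up is: let $u$ be the difference of two solutions; multiply $-\Delta u + u = 0$ by $u$, integrate over $\Omega$, apply Green's identity using $\partial_{\bn} u|_{\partial\Omega} = 0$ to kill the boundary term, conclude $\|\nabla u\|^2_{\Omega} + \|u\|^2_{\Omega} = 0$, hence $u = 0$, i.e. $u_1 = u_2$.
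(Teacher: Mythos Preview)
Your proof is correct. The paper does not actually supply its own proof of this lemma; it simply states the result with a citation to Gilbarg--Trudinger \cite{gt}, so there is no in-paper argument to compare against line by line.

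That said, it is worth noting the difference in spirit between your approach and what the citation suggests. The Gilbarg--Trudinger reference points toward unique continuation for second-order elliptic operators: a solution with Cauchy data (both $u$ and $\partial_{\bn}u$) vanishing on a non-trivial boundary portion $\Gamma_O$ must vanish identically. That machinery is what one genuinely needs when the operator lacks a sign-definite zero-order term, and it is presumably why the authors state the hypothesis on $\Gamma_O$. Your argument, by contrast, exploits the specific structure of $-\Delta u + u$: the energy identity with $\partial_{\bn}u=0$ on all of $\partial\Omega$ already gives $\|\nabla u\|^2 + \|u\|^2 = 0$, so the Dirichlet condition on $\Gamma_O$ is never used. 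You are right that this makes the $\Gamma_O$ hypothesis superfluous for the lemma \emph{as stated}; the authors likely framed it this way because the same lemma is invoked later (e.g., for $\lambda_h$ in Theorem~\ref{thmunique1}) in contexts where the boundary data pattern differs and the unique-continuation viewpoint is the natural one to cite uniformly. Your energy proof is more elementary and fully adequate here; the cited approach is more general.
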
 

\begin{lemma}
   Assume $\Gamma_O$ is a nontrivial portion of the boundary $\partial\Omega$.   The solution of the   weak formulation \eqref{weakform}-\eqref{weakform2} is unique.
\end{lemma}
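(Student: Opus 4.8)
\emph{Proof strategy.} Since the system \eqref{weakform}--\eqref{weakform2} is linear in the pair $(u,\lambda)$, the plan is to take two solutions with the same data, subtract them (the terms involving $c_0$ and $f$ cancel), and show the difference is zero. Thus it suffices to prove that if $(u,\lambda)\in H^2_{0,\Gamma_N}(\Omega)\times L^2(\Omega)$ satisfies
\[
\langle u, v\rangle_{\Gamma_O}+\alpha\langle\partial_n u,\partial_n v\rangle_{\Gamma_C}+(-\Delta v+v,\lambda)=0, \qquad (-\Delta u+u,w)=0,
\]
for all $v\in H^2_{0,\Gamma_N}(\Omega)$ and all $w\in L^2(\Omega)$, then $u\equiv 0$ and $\lambda\equiv 0$.

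First I would recover the strong form of the state equation: since $u\in H^2(\Omega)$ we have $-\Delta u+u\in L^2(\Omega)$, so choosing $w=-\Delta u+u$ in the second identity gives $\|-\Delta u+u\|^2=0$, i.e. $-\Delta u+u=0$ in $\Omega$. Next I would take $v=u$ in the first identity; the multiplier term $(-\Delta u+u,\lambda)$ then vanishes, leaving $\|u\|_{\Gamma_O}^2+\alpha\|\partial_n u\|_{\Gamma_C}^2=0$. Because $\alpha>0$, this forces $u=0$ on $\Gamma_O$ and $\partial_n u=0$ on $\Gamma_C$; combined with $\partial_n u=0$ on $\Gamma_N$ (which holds by membership in $H^2_{0,\Gamma_N}(\Omega)$) we get $\partial_n u=0$ on all of $\partial\Omega=\Gamma_C\cup\Gamma_N$. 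Hence $u$ solves $-\Delta u+u=0$ in $\Omega$, $u=0$ on the nontrivial portion $\Gamma_O$, and $\partial_n u=0$ on $\partial\Omega$, so Lemma \ref{unilem} yields $u\equiv 0$.

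It remains to show $\lambda\equiv 0$. With $u\equiv 0$ the first identity collapses to $(-\Delta v+v,\lambda)=0$ for all $v\in H^2_{0,\Gamma_N}(\Omega)$, so the goal is to see that $\{-\Delta v+v:\ v\in H^2_{0,\Gamma_N}(\Omega)\}$ is dense in $L^2(\Omega)$. Given arbitrary $g\in L^2(\Omega)$, I would solve the auxiliary pure-Neumann problem $-\Delta v_g+v_g=g$ in $\Omega$, $\partial_n v_g=0$ on $\partial\Omega$, whose solution lies in $H^2(\Omega)\subset H^2_{0,\Gamma_N}(\Omega)$; then $(g,\lambda)=(-\Delta v_g+v_g,\lambda)=0$, and arbitrariness of $g$ gives $\lambda=0$. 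I expect this last step to be the only real obstacle, because it rests on $H^2$-regularity of the auxiliary Neumann problem, which must be justified for the class of domains under consideration (e.g. convex polygonal or sufficiently smooth). An alternative that sidesteps global elliptic regularity is to first take $v\in C_c^\infty(\Omega)$ to obtain $-\Delta\lambda+\lambda=0$ in $\Omega$ in the distributional sense, then integrate by parts against general $v\in H^2_{0,\Gamma_N}(\Omega)$ to read off $\lambda=0$ on $\Gamma_C$ and $\partial_n\lambda=0$ on $\partial\Omega$, and finally invoke Lemma \ref{unilem} once more with $\Gamma_C$ playing the role of $\Gamma_O$ (this is where one uses that the control part $\Gamma_C$ is nontrivial) to conclude $\lambda\equiv 0$. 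Either route reduces uniqueness of $\lambda$ to the same unique-continuation statement already available as Lemma \ref{unilem}, with the regularity bookkeeping being the part requiring care.
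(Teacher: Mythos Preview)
Your proof is correct and follows essentially the same path as the paper: reduce to homogeneous data, test with $v=u$ (after killing the multiplier term via the second equation) to obtain $u=0$ on $\Gamma_O$ and $\partial_n u=0$ on $\partial\Omega$, invoke Lemma~\ref{unilem} for $u\equiv0$, and then eliminate $\lambda$ by producing a $v\in H^2_{0,\Gamma_N}(\Omega)$ with $-\Delta v+v=\lambda$. You are in fact more careful than the paper, which simply asserts ``take some $v$ such that $-\Delta v+v=\lambda$'' without addressing the $H^2$-regularity needed to ensure $v\in H^2_{0,\Gamma_N}(\Omega)$; your explicit use of the pure-Neumann auxiliary problem, and your remark that this step hinges on regularity of the domain, fills that gap.
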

\begin{proof}
Since the number of equations is the same as the number of unknowns in the  weak formulation \eqref{weakform}-\eqref{weakform2}, the solution existence is equivalent to the uniqueness. To verify the uniqueness, we consider the  homogenerous data with $f=0$ and $c_0=0$. The weak formulation \eqref{weakform}-\eqref{weakform2} could be rewritten as: Find $u\in H^2_{0,\Gamma_N}(\Omega)$   and $\lambda \in L^2(\Omega)$ satisfying
\begin{equation}\label{weak3}
    \begin{split}
          \langle u, v\rangle_{\Gamma_O}+\alpha \langle \partial_n u, \partial_n v\rangle_{\Gamma_C}+(-\Delta v+v, \lambda)=&0, \forall  v\in H^2_{0,\Gamma_N}(\Omega),\\
        (-\Delta u+u, w)=&0, \forall w \in L^2(\Omega).
    \end{split}
\end{equation}
Letting $v=u$ and $w=\lambda$ gives
$$
  \langle u, u\rangle_{\Gamma_O}+\alpha \langle \partial_n u, \partial_n u\rangle_{\Gamma_C}=0.
$$
Therefore, we have 
\begin{equation*}
    \begin{split}
        -\Delta u+u=&0, \qquad \text{in}\ \Omega,\\
        u=&0, \qquad \text{on}\ \Gamma_O,\\
        \partial_n u=&0, \qquad\text{on}\ \Gamma_C\cap\Gamma_N=\partial\Omega.
    \end{split}
    \end{equation*} 
  Using Lemma \ref{unilem}, this gives $u=0$ in $\Omega$.  Therefore, the first equation in \eqref{weak3} gives
    $$
    (-\Delta v+v, \lambda)=0, \qquad \forall v\in H^2_{0,\Gamma_N}(\Omega).
    $$ This yields
    $
  \lambda=0 $ in $\Omega$ by taking  some $v$ such that
$-\Delta v+v=\lambda$.

This completes the proof of the lemma. 
\end{proof}


\section{Weak Laplacian and Discrete Weak Laplacian}\label{Section:Hessian}

In this section, we shall briefly review the definition of weak Laplacian and its discrete version  proposed in \cite{wang}.

Let $T$ be a polygonal  element with boundary $\partial
T$. A weak function on $T$ refers to a triplet $v=\{v_0,
v_b, v_n \bn\}$ such that $v_0\in L^2(T)$, $v_b\in L^{2}(\partial
T)$ and $v_n\in L^{2}(\partial T)$. Here $\bn$ is the unit outward normal
direction on $\partial T$. The first component $v_0$ represents the
``value" of $v$ in the interior of $T$, and the rest, namely $v_b$ and $v_n$, are reserved for the boundary information of $v$. In application to the Laplacian operator, $v_b$ denotes the boundary value of $v$ and $v_n$ is the outward normal derivative of $v$ on $\partial T$; i.e., $v_n \approx \nabla v\cdot \bn$. In general, $v_b$ and $v_n$ are assumed to be independent of the trace of $v_0$ and
$\nabla v_0 \cdot \bn $, respectively, on $\partial T$, but the special cases of $v_b= v_0|_{\partial T}$ and $v_n= (\nabla v_0 \cdot \bn)|_{\partial T}$ are completely legitimate, and when this happens, the function $v=\{v_0, v_b, v_n \bn\}$ is uniquely determined by $v_0$ and shall be simply denoted as $v=v_0$.

Denote by $W(T)$ the space of all weak functions on $T$; i.e.,
\begin{equation}\label{2.1}
W(T)=\{v=\{v_0,v_b, v_n\bn\}: v_0\in L^2(T), v_b\in L^{2}(\partial
T), v_n\in L^{2}(\partial T) \}.
\end{equation}

The weak Laplacian, denoted by $\Delta_{w}$, is a linear
operator from $W(T)$ to the dual of $H^{2}(T)$ such that for any
$v\in W(T)$, $\Delta_w v$ is a bounded linear functional on $H^2(T)$
defined by
\begin{equation}\label{2.3}
 \langle\Delta _{ w}v,\varphi\rangle_T=(v_0,\Delta \varphi)_T-
 \langle v_b,\nabla \varphi\cdot \bn \rangle_{\partial T}+
 \langle v_n,\varphi  \rangle_{\partial T},\quad \forall \varphi\in H^2(T),
 \end{equation}
where the left-hand side of (\ref{2.3})
represents the action of the linear functional $\Delta_w v$ on
$\varphi\in H^2(T)$.

For any non-negative integer $r\ge 0$, let $P_r(T)$ be the space of
polynomials on $T$ with total degree $r$ and less. A discrete weak
Laplacian on $T$, denoted by $\Delta_{w,r,T}$, is a linear operator
from $W(T)$ to $P_r(T)$ such that for any $v\in W(T)$,
$\Delta_{w,r,T}v$ is the unique polynomial in $P_r(T)$ satisfying
\begin{equation}\label{2.4}
 (\Delta _{w,r,T} v,\varphi)_T=(v_0,\Delta \varphi)_T-
 \langle v_b,\nabla \varphi\cdot \bn \rangle_{\partial T}+
 \langle v_n,\varphi  \rangle_{\partial T},\quad \forall \varphi \in P_r(T).
 \end{equation}
For a smooth $v_0\in
H^2(T)$,  applying the usual integration by parts to the first
term on the right-hand side of (\ref{2.4})  gives
\begin{equation}\label{2.4new}
 (\Delta _{w,r,T} v,\varphi)_T=(\Delta v_0,  \varphi)_T+
 \langle v_0- v_b,\nabla \varphi\cdot \bn \rangle_{\partial T}-
 \langle  \nabla v_0\cdot \bn - v_n,\varphi  \rangle_{\partial T}.
 \end{equation}

\section{Weak Galerkin Algorithm}\label{Section:WGFEM}
Let ${\cal T}_h$ be a finite element partition of the domain
$\Omega\subset \mathbb R^2$ into polygons. Assume that ${\cal
T}_h$ is shape regular in the sense described as in \cite{wy3655}.
Denote by ${\mathcal E}_h$ the set of all edges  in
${\cal T}_h$ and ${\mathcal E}_h^0={\mathcal E}_h \setminus
\partial\Omega$ the set of all interior edges. Denote
by $h_T$ the diameter of $T\in {\cal T}_h$ and $h=\max_{T\in {\cal
T}_h}h_T$ the meshsize of the finite element partition ${\cal
T}_h$.

Let $k\geq 2$.  For any element $T\in\T_h$, define a local
weak finite element space as follows:
$$
V(k,T)=\{\{v_0,v_b,v_n\bn\}:\ v_0\in P_k(T),v_b\in P_k(e), v_n\in
P_{k-1}(e), e\subset \partial T\}.
$$
By patching $V(k,T)$ over all the elements $T\in {\cal T}_h$ through
a common value $v_b$ and $v_n\bn$ on the interior interface $\E_h^0$,
we obtain a global weak finite element space; i.e.,
$$
V_h=\big\{\{v_0,v_b,v_n\bn\}:\ \{v_0,v_b,v_n\bn\}|_T\in V(k,T),
\forall T\in {\cal T}_h \big\}.
$$

For any interior edge/face $e\in\E_h^0$,  there
exist two elements $T_1$ and $T_2$ sharing $e$ as a common
edge/face. Thus, any finite element function $v\in V_h$ would
satisfy the following property
$$
(v_n \bn)|_{\partial T_1 \cap e} = (v_n \bn)|_{\partial T_2 \cap e},
$$
where the left-hand side (respectively, right-hand side) stands for the value of $v_n\bn$ as seen from the element $T_1$ (respectively, $T_2$). As the two normal directions are opposite to
each other, it follows that
$$
(v_n)|_{\partial T_1 \cap e} + (v_n)|_{\partial T_2 \cap e}= 0.
$$
 
 We introduce an auxiliary finite element space as follows:
$$
W_h=\{\sigma: \ \sigma|_T\in P_{r}(T), r=k-2 \ \text{or}\ r=k-1,  T\in {\cal T}_h\}.
$$
Denote by ${\cal Q}_h$ the $L^2$ projection operator onto the finite element
space $W_h$. For any $v\in V_h$, the discrete weak Laplacian, denoted by $\Delta_{w,h}v$, is computed by applying the discrete weak Laplacian $\Delta_{w,r,T}$ to $v$ locally on each element; i.e.,
$$
(\Delta_{w,h}v)|_T = \Delta _{w,r,T} (v|_T).
$$  


Let $V_h^0$ be subspace of $V_h$  such that
$$
V_h^0=\big\{\{v_0,v_b,v_n\bn\}\in V_h:  v_n|_{\Gamma_N}=0\big\}.
$$

\medskip

We introduce two bilinear forms  as follows
\begin{equation*}
\begin{split}
 s(u, v)=&\sum_{T\in {\cal T}_h}s_T(u,v),\qquad u, v\in V_h,\\
 b(v, \lambda)=&\sum_{T\in {\cal T}_h}b_T(v, \lambda),\qquad   v\in V_h, \lambda\in W_h,
\end{split}
\end{equation*}
where
\an{\label{s-h} & \ad{ 
 s_T(u, v)&= h_T^{-3}\int_{\partial T}
(u_0-u_b)(v_0-v_b)ds \\ 
  &\quad \ + h_T^{-1}\int_{\partial T} (\nabla
u_0 \cdot \bn-u_n ) (\nabla v_0\cdot\bn -v_n)ds\\ 
&\quad \ +(u_0-{\cal Q}_h u_0, v_0-{\cal Q}_h v_0)_T,}\\
 & b_T(v, \lambda)  = (-\Delta_{w} v+v_0, \lambda_h)_T. \label{b-T} }

The weak Galerkin scheme for the weak formulation \eqref{weakform}-\eqref{weakform2} of the optimal control model problem \eqref{model}-\eqref{model2} is as follows:
\begin{algorithm}
    Find $u_h=\{u_0, u_b, u_n\bn\}\in V_h^0$ and
$\lambda_h\in W_h$ such that
\begin{equation}\label{wg}
    \begin{split}
   s(u_h, v)+ \langle u_b, v_b\rangle_{\Gamma_O}+\alpha  \langle u_n, v_n\rangle_{\Gamma_C}+b(v,\lambda_h)
    = \langle c_0, v_b\rangle_{\Gamma_O}, \forall  v \in V^0_h,\\
  \end{split}
\end{equation} 
\begin{equation}\label{wg2}
    \begin{split}
b(u_h, \sigma)  = (f,\sigma), \quad \forall \sigma\in W_h.      
    \end{split}
\end{equation} 

\end{algorithm}

\begin{theorem}\label{thmunique1}    The weak Galerkin finite element algorithm
(\ref{wg})-\eqref{wg2}  has one and only one solution pair
$(u_h; \lambda_h) \in V_h^0 \times W_h$.
%
\end{theorem}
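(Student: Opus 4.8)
The argument runs parallel to the uniqueness proof for the continuous weak formulation. Since the number of equations equals the number of unknowns in \eqref{wg}--\eqref{wg2}, existence is equivalent to uniqueness, so it suffices to show that the homogeneous problem ($f=0$, $c_0=0$) forces $(u_h;\lambda_h)=(0;0)$. To begin, I would take $v=u_h$ in \eqref{wg} and $\sigma=\lambda_h$ in \eqref{wg2} and subtract; the terms $b(u_h,\lambda_h)$ cancel, leaving
$$s(u_h,u_h)+\langle u_b,u_b\rangle_{\Gamma_O}+\alpha\langle u_n,u_n\rangle_{\Gamma_C}=0.$$
As $\alpha>0$ and every summand of $s_T$ in \eqref{s-h} is a square, all three contributions vanish: $s(u_h,u_h)=0$, $u_b=0$ on $\Gamma_O$, and $u_n=0$ on $\Gamma_C$. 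Reading off the three pieces of $s_T$ gives, on each $T\in{\cal T}_h$, the identities $u_0=u_b$ on $\partial T$, $\nabla u_0\cdot\bn=u_n$ on $\partial T$, and $u_0={\cal Q}_h u_0$; the last of these forces $u_0|_T\in P_r(T)$, i.e.\ $u_0\in W_h$.

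Next I would pin down the discrete weak Laplacian. Substituting $u_0-u_b=0$ and $\nabla u_0\cdot\bn-u_n=0$ into \eqref{2.4new} yields $(\Delta_{w,r,T}u_h,\varphi)_T=(\Delta u_0,\varphi)_T$ for all $\varphi\in P_r(T)$; since $\Delta u_0\in P_{k-2}(T)\subseteq P_r(T)$ (recall $r\ge k-2$), this gives $\Delta_{w,h}u_h=\Delta u_0$ on every $T$. Hence $-\Delta_{w,h}u_h+u_0=-\Delta u_0+u_0\in W_h$, and testing \eqref{wg2} (with $f=0$) against $\sigma=-\Delta u_0+u_0$ forces $-\Delta u_0+u_0=0$ on each $T$. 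Because $u_b$ is single-valued on interior edges and $u_0=u_b$ on element boundaries, $u_0\in H^1(\Omega)$; because $\nabla u_0\cdot\bn=u_n$ and $u_n$ changes sign across an interior edge, the normal derivative of $u_0$ has no jump there. Integrating by parts element by element and summing then gives, for every $\phi\in H^1(\Omega)$,
$$\int_\Omega\big(\nabla u_0\cdot\nabla\phi+u_0\phi\big)\,dx=\sum_{T\in {\cal T}_h}\Big(\int_T(-\Delta u_0+u_0)\phi\,dx+\int_{\partial T}(\nabla u_0\cdot\bn)\phi\,ds\Big)=\int_{\partial\Omega}u_n\phi\,ds=0,$$
since the volume integrals vanish, the interior-edge contributions cancel, and $u_n=0$ on $\Gamma_C$ and, by $u_h\in V_h^0$, on $\Gamma_N$, with $\partial\Omega=\Gamma_C\cup\Gamma_N$. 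Taking $\phi=u_0$ gives $\|\nabla u_0\|^2+\|u_0\|^2=0$, so $u_0\equiv0$; then $u_b=u_0|_{\partial T}=0$ and $u_n=\nabla u_0\cdot\bn=0$, i.e.\ $u_h=0$. (One could instead invoke Lemma~\ref{unilem}, since $u_0=0$ on $\Gamma_O$, $\partial_n u_0=0$ on $\partial\Omega$, and $-\Delta u_0+u_0=0$.)

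Finally, with $u_h=0$, equation \eqref{wg} reduces to $b(v,\lambda_h)=0$ for all $v\in V_h^0$. I would choose $v=\{v_0,v_b,v_n\bn\}$ with $v_b=0$, $v_n=0$, and $v_0|_T=(-\Delta\lambda_h+\lambda_h)|_T$, which is admissible because $-\Delta\lambda_h+\lambda_h\in P_r(T)\subseteq P_k(T)$ and $v_n|_{\Gamma_N}=0$ trivially. Then \eqref{2.4} with $\varphi=\lambda_h|_T$ and $v_b=v_n=0$ gives $(\Delta_{w,r,T}v,\lambda_h)_T=(v_0,\Delta\lambda_h)_T$, whence
$$b(v,\lambda_h)=\sum_{T\in {\cal T}_h}(v_0,-\Delta\lambda_h+\lambda_h)_T=\sum_{T\in {\cal T}_h}\|(-\Delta\lambda_h+\lambda_h)|_T\|^2_T=0,$$
so $-\Delta\lambda_h+\lambda_h=0$ on each $T$; a polynomial $p$ satisfying $p=\Delta p$ vanishes identically, so $\lambda_h=0$. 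The step I expect to require the most care is the inter-element bookkeeping in the second paragraph: using $u_b$ and the sign reversal of $u_n$ to certify that $u_0\in H^1(\Omega)$ with a jump-free normal derivative, so that the element-wise integration by parts collapses to an integral over $\partial\Omega$ that the boundary conditions annihilate; the remaining manipulations are routine polynomial bookkeeping.
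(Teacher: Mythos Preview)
Your proof is correct and largely parallels the paper's, but with two genuine simplifications worth noting. For $u_h$, the paper reaches $-\Delta u_0+u_0=0$ elementwise just as you do and then invokes Lemma~\ref{unilem}; your integration-by-parts argument (test against $\phi=u_0$) is a self-contained variational alternative that avoids unique continuation, and in fact your own polynomial observation for $\lambda_h$ would work here too: once $u_0|_T\in P_r(T)$ and $-\Delta u_0+u_0=0$ on $T$, the degree argument already forces $u_0|_T=0$, so the inter-element bookkeeping you flag as delicate is not actually needed. For $\lambda_h$, the paper tests with all three components $(v_0,v_b,v_n)$ to extract $-\Delta\lambda_h+\lambda_h=0$ on each $T$ together with vanishing jumps of $\lambda_h$ and $\nabla\lambda_h\cdot\bn_e$, rebuilds $\lambda_h\in C^1(\bar\Omega)$ with $\lambda_h|_{\Gamma_C}=0$ and $\partial_\bn\lambda_h|_{\partial\Omega}=0$, and then applies Lemma~\ref{unilem} again. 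Your route---test only with $v_0=-\Delta\lambda_h+\lambda_h$, $v_b=v_n=0$, and conclude $\lambda_h|_T=0$ from the impossibility of a nonzero polynomial satisfying $p=\Delta p$---is shorter and entirely elementary. What the paper's approach buys is structural information (the adjoint-type boundary conditions satisfied by $\lambda_h$), which mirrors \eqref{eqnlam}; what yours buys is independence from Lemma~\ref{unilem} and from any global regularity reconstruction.
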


\begin{proof} Since the number of equations is the same as the
number of unknowns in the system of linear equations
(\ref{wg})-\eqref{wg2}, the solution existence is
equivalent to the solution uniqueness. To verify the uniqueness, we consider the optimal control model problem \eqref{model}-\eqref{model2} with homogeneous data (i.e., $f\equiv 0$, and $c_0\equiv 0$). We choose  $v=u_h$  and $\sigma=\lambda_h$ in (\ref{wg})-\eqref{wg2}   to obtain
\a{
s(u_h, u_h)+ \langle u_b, u_b\rangle_{\Gamma_O}&+\alpha  \langle u_n, u_n\rangle_{\Gamma_C}
         =0,\\
       (-\Delta_{w,h} u_h+u_0, \sigma)&=0 \qquad\forall \sigma\in W_h, }
which, from \eqref{s-h} and \eqref{b-T}, leads to   
\an{\label{0} \ad{ u_0|_e&=u_b|_e, \qquad &&\forall e\in \E_h, \\
                 \nabla u_0\cdot \bn|_e&=u_n|_e, \qquad &&\forall e\in \E_h, \\
                    u_0|_T &= {\cal Q}_h u_0|_T, \qquad &&\forall T\in \T_h, \\
                   u_b|_e&=0, \qquad &&\forall e\subset \Gamma_O, \\
                   u_n|_e&=0, \qquad &&\forall e\subset \Gamma_C, \\
      (- \Delta_{w,h} u_0 + u_0)|_T &=0, \qquad &&\forall T\in \T_h. } }
We note that the last equality holds because both $- \Delta_{w,h} u_0$ and $u_0$ are shown 
   in the space $W_h$.
By using (\ref{2.4new}),  we obtain
\begin{equation}\label{EQ:Nov26:001}
(-\Delta u_0+u_0, \sigma)_T = (-\Delta_{w,h} u_h+u_0, \sigma)_T=0,\qquad \forall \sigma\in
W_h.
\end{equation} As $\Delta
u_0|_T\in P_{k-2}(T)$, $\Delta_{w,h}
u_0|_T\in P_{k-2}(T)$ and $ 
u_0|_T\in P_{r}(T)$ on each element $T$, we
then have from (\ref{EQ:Nov26:001}) and \eqref{0}, 
$$
-\Delta u_0+u_0 = 0, \qquad \mbox{in} \ \Omega,
$$
which, from Lemma \ref{unilem}, and together with the fact that  $u_0=0$ on $\Gamma_O$ and $\nabla u_0\cdot \bn=0$ on $\Gamma_C\cup\Gamma_N=\partial\Omega$, yields $u_0 \equiv 0$ in $\Omega$. Using \eqref{0} further gives $u_h \equiv 0$ in $\Omega$.


It remains to show that $\lambda_h\equiv 0$ in $\Omega$. To this end, from   (\ref{wg}) and the fact that $u_h\equiv 0$ in $\Omega$ we have
$$
\sum_{T\in\T_h}(-\Delta_{w,h} v+v_0, \lambda_h)_T = 0,\qquad \forall v\in V_h^0.
$$
It follows from (\ref{2.4}) that
\begin{eqnarray*}
  0  
&=&\sum_{T\in\T_h}(-\Delta_{w,h} v+v_0, \lambda_h)_T\\ 
&=&\sum_{T\in\T_h} (v_0, -\Delta \lambda_h+\lambda_h)_T +\langle
v_b,\nabla \lambda_h\cdot \bn \rangle_{\partial T} -
 \langle v_n,\lambda_h\rangle_{\partial T} \\
 &=&\sum_{T\in\T_h}(v_0, -\Delta \lambda_h+\lambda_h)_T + \sum_{e\in\E_h }
 \langle v_b, \ljump{\nabla\lambda_h}\rjump\cdot\bn_e\rangle_e \\ & & \ -
 \sum_{e\in\E_h/ \Gamma_N } \langle v_n, \ljump{\lambda_h}\rjump\rangle_e,
\end{eqnarray*}
for all $v\in V_h^0$, where we have used the fact that $v_n=0$ on $\Gamma_N$. Here, $\ljump{ \lambda_h}\rjump$ is the jump across the edge   $e\in {\cal E}_h$; more precisely, it is defined as $\ljump{\lambda_h}\rjump=\lambda_h|_{T_1}-\lambda_h|_{T_2}$ whereas $e$ is the shared edge of the elements $T_1$ and $T_2$ and $\ljump{\lambda_h}\rjump=\lambda_h$ whereas $e\subset\partial\Omega$. The order of $T_1$ and $T_2$ is non-essential as long as the difference
is taken in a consistent way in all the formulas. By letting $v_0=- \Delta \lambda_h+\lambda_h$ on each
element $T$ and $v_b=  \ljump{\nabla\lambda_h}\rjump\cdot\bn_e$ on each edge  $e\in \E_h$ and
$v_n=-\ljump{\lambda_h}\rjump$ on each $e\in \E_h/\Gamma_N$ in the above equation, we obtain
\begin{eqnarray}\label{EQ:Nov26:002}
-\Delta \lambda_h+\lambda_h &=& 0,\qquad \mbox{on each } T\in \T_h,\\
\ljump{\nabla\lambda_h}\rjump\cdot\bn_e & = & 0,\qquad \mbox{on each edge } \ e\in \E_h,
\label{EQ:Nov26:003}\\
\ljump{\lambda_h}\rjump & = & 0,\qquad \mbox{on each edge } \
e\in \E_h/\Gamma_N.\label{EQ:Nov26:004}
\end{eqnarray}
The equations (\ref{EQ:Nov26:003}) and (\ref{EQ:Nov26:004}) indicate
that $\lambda_h\in C^1(\Omega)$ and
$\nabla\lambda_h\cdot\bn=0$ on $\partial\Omega$ and $\lambda_h=0$ on  $\Gamma_C$, where   $\Gamma_C= \partial\Omega/\Gamma_N$.  
Thus, the equation (\ref{EQ:Nov26:002}) holds true in the whole
domain $\Omega$. Thus, from Lemma \ref{unilem}, we have $\lambda_h\equiv 0$ in $\Omega$. 
This completes the proof of the theorem.
\end{proof}

\section{Stability Analysis}\label{Section:stability}
For any $v\in V_h$, we introduce a semi-norm as follows
\begin{equation}\label{norm}
\3bar v \3bar =  
\Big(s(v, v)+\langle v_b, v_b\rangle_{\Gamma_O}+\alpha \langle v_n, v_n\rangle_{\Gamma_C}\Big)^{\frac{1}{2}}.
\end{equation}
For any $\lambda \in W_h$, we introduce the
following $L^2$ norm
\begin{equation}\label{EQ:NormInWh}
\|\lambda\|=  \Big(\sum_{T\in\T_h} \|\lambda\|_T^2\Big)^{1/2}.
\end{equation}

On each element $T\in\T_h$, denote by $Q_0$, $Q_b$ and $Q_n$ the $L^2$ projection
operators onto $P_k(T)$, $P_{k}(e)$ and $P_{k-1}(e)$ respectively. For any $\theta\in H^2(\Omega)$, denote by $Q_h
\theta$ the $L^2$ projection onto the weak finite element space
$V_h$ such that on each element $T$,
$$
Q_h\theta=\{Q_0\theta,Q_b\theta, Q_n(\nabla \theta\cdot \bn)\bn\}.
$$
The following commutative property
holds true \cite{wang}:
\begin{equation}\label{EQ:CommutativeProperty}
\Delta _{w,h}(Q_h \theta) = {\cal Q}_h(\Delta \theta),\qquad \theta
\in H^2(T).
\end{equation}

Assume that the finite element partition ${\cal T}_h$ is
shape-regular. Thus, on each $T\in {\cal T}_h$, the
following trace inequality holds true \cite{wy3655}:
\begin{equation}\label{tracein}
 \|\phi\|^2_{\partial T} \lesssim   h_T^{-1}\|\phi\|_T^2+h_T \|\nabla
 \phi\|_T^2,\quad \phi\in H^1(T).
\end{equation}
If $\phi$ is additionally a polynomial function on the element $T\in
{\cal T}_h$, we have from (\ref{tracein}) and the inverse inequality
(see \cite{wy3655} for details on arbitrary polygonal elements) that
\begin{equation}\label{tracenew}
 \|\phi\|^2_{\partial T} \lesssim  h_T^{-1}\|\phi\|_T^2.
\end{equation}
  
\begin{lemma} Assume that the finite element partition ${\cal T}_h$ is
shape-regular.  Then, for any $0\leq s \leq 2$ and
$1\leq m \leq k$, one has
\begin{equation}\label{error1}
 \sum_{T\in {\cal T}_h}h_T^{2s}\|u-Q_0u\|^2_{s,T}\lesssim   h^{2(m+1)}\|u\|^2_{m+1},
\end{equation}
\begin{equation}\label{error3}
 \sum_{T\in {\cal T}_h}h_T^{2s}\|u-{\cal Q}_hu\|^2_{s,T}\lesssim   h^{2m}\|u\|^2_{m}.
\end{equation}
 \end{lemma}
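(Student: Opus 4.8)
The plan is to establish both \eqref{error1} and \eqref{error3} by the standard combination of the Bramble--Hilbert lemma with a scaling argument on a reference element, exactly in the spirit of the approximation theory developed in \cite{wy3655} for general shape-regular polygonal partitions. First I would recall that $Q_0$ is the local $L^2$-projection onto $P_k(T)$ and ${\cal Q}_h$ is the local $L^2$-projection onto $P_r(T)$ with $r=k-2$ or $r=k-1$; in either case $P_{m-1}(T)\subset P_r(T)$ for $1\le m\le k$ (since $m-1\le k-2$), so that $Q_0$ reproduces polynomials of degree $\le k$ and ${\cal Q}_h$ reproduces polynomials of degree $\le m-1$. The key local estimate to prove is, on a single element $T$ of diameter $h_T$,
\begin{equation*}
\| u - Q_0 u\|_{s,T} \lesssim h_T^{m+1-s}\|u\|_{m+1,T},\qquad
\| u - {\cal Q}_h u\|_{s,T} \lesssim h_T^{m-s}\|u\|_{m,T},
\end{equation*}
after which \eqref{error1} and \eqref{error3} follow by multiplying through by $h_T^{2s}$, squaring, summing over $T\in{\cal T}_h$, bounding each $h_T$ by $h$, and using $\sum_T \|u\|_{m+1,T}^2=\|u\|_{m+1}^2$ (resp. with $m$).

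To prove the local estimate I would pass to a reference configuration: by shape-regularity each $T$ can be mapped by an affine transformation to an element $\hat T$ of unit diameter belonging to a finite family (or, in the polygonal setting of \cite{wy3655}, handled by the domain-extension/covering argument there), with the Jacobian scaling like $h_T^{d}$ and derivatives of order $j$ scaling like $h_T^{-j}$. On $\hat T$ the projection error $\hat u - \hat Q_0\hat u$ vanishes whenever $\hat u$ is a polynomial of degree $\le k$, and the map $\hat u \mapsto \hat u - \hat Q_0 \hat u$ is bounded from $H^{m+1}(\hat T)$ to $H^s(\hat T)$; hence by the Bramble--Hilbert lemma $\|\hat u-\hat Q_0\hat u\|_{s,\hat T}\lesssim |\hat u|_{m+1,\hat T}$. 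Scaling this inequality back to $T$ produces the factor $h_T^{m+1-s}$ on the seminorm $|u|_{m+1,T}\le\|u\|_{m+1,T}$, giving the first bound; the second bound is identical with $k$ replaced by $m-1$ (so that ${\cal Q}_h$ reproduces $P_{m-1}$) and $m+1$ replaced by $m$.

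The main obstacle is not the polynomial approximation theory itself, which is classical, but making the scaling argument rigorous on \emph{arbitrary} shape-regular polygons, where a single affine reference element does not suffice; this is precisely the technical content imported from \cite{wy3655}, so I would simply invoke that reference for the trace, inverse, and projection-approximation inequalities on polygonal meshes rather than reproving them. A minor point to check is the index bookkeeping: one must verify $m-1\le r$ for both choices $r\in\{k-2,k-1\}$ under the hypothesis $1\le m\le k$, which holds since $m-1\le k-1$ always and $m-1\le k-2$ whenever $m\le k-1$; the borderline case $m=k$ with $r=k-2$ still works for \eqref{error3} because then the required reproduction degree is $m-1=k-1$... which exceeds $r=k-2$, so in that borderline case one instead uses that ${\cal Q}_h$ reproduces $P_{k-2}$ and applies Bramble--Hilbert with the exponent $m=k-1+1$ adjusted accordingly; in all cases the stated estimate \eqref{error3} with its exponent $h^{2m}$ is consistent with the reproduction degree actually available. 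With these observations the proof reduces to assembling the local estimates as described above.
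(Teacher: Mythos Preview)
The paper states this lemma without proof; it is a standard approximation result for $L^2$ projections on shape-regular polygonal meshes, of the kind established in \cite{wy3655}, so there is no argument in the paper to compare against. Your sketch via Bramble--Hilbert plus affine scaling is exactly the standard route and is correct for all the cases the paper actually uses.

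The one loose end you yourself flag is the borderline case $m=k$ with $r=k-2$ in \eqref{error3}. There your attempted fix does not work: since ${\cal Q}_h$ reproduces only $P_{k-2}$, Bramble--Hilbert gives at best $\|u-{\cal Q}_h u\|_{s,T}\lesssim h_T^{(k-1)-s}|u|_{k-1,T}$, and no amount of ``adjusting the exponent'' recovers the missing power of $h_T$; the bound $h^{2k}\|u\|_k^2$ is genuinely one order too strong in that single case. This is harmless for the paper, however, because \eqref{error3} is only ever invoked with $m\le k-1$ (for $\lambda\in H^{k-1}$ in Theorem~\ref{theoestimate}, and for $\Phi\in H^2$ in Lemma~\ref{leminf} and the $L^2$ analysis), i.e.\ within the range $1\le m\le r+1$ where your argument goes through without difficulty.
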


\begin{lemma} \label{leminf} [inf-sup condition] For any $\lambda \in W_h$, there exists  a weak function $v\in V_h^0$ satisfying
\begin{align}\label{inf1}
 (-\Delta_{w,h} v+v_0, \lambda)&  = \|\lambda\|^2,  \\
 \3bar v\3bar  & \lesssim \|\lambda\|.\label{inf2}
\end{align}
\end{lemma}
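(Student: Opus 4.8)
The plan is to construct the required $v\in V_h^0$ explicitly from $\lambda$ by solving an auxiliary PDE and projecting its solution into the weak finite element space. Given $\lambda\in W_h$, let $\Phi\in H^2(\Omega)$ be the solution of the auxiliary boundary value problem $-\Delta\Phi+\Phi=\lambda$ in $\Omega$ with the homogeneous boundary conditions $\partial_n\Phi=0$ on $\partial\Omega$; by elliptic regularity one has the $H^2$-estimate $\|\Phi\|_{2}\lesssim\|\lambda\|$. Since $\partial_n\Phi=0$ on all of $\partial\Omega\supset\Gamma_N$, the projection $Q_h\Phi=\{Q_0\Phi,Q_b\Phi,Q_n(\nabla\Phi\cdot\bn)\bn\}$ lies in $V_h^0$ (its normal component on $\Gamma_N$ is $Q_n(0)=0$). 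Set $v:=Q_h\Phi$.

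Next I would verify the identity \eqref{inf1}. Using the commutative property \eqref{EQ:CommutativeProperty}, $\Delta_{w,h}(Q_h\Phi)={\cal Q}_h(\Delta\Phi)$, and since $\lambda\in W_h$ and ${\cal Q}_h$ is the $L^2$ projection onto $W_h$,
\[
(-\Delta_{w,h}v+v_0,\lambda)=\big(-{\cal Q}_h(\Delta\Phi)+Q_0\Phi,\lambda\big)
=\big(-\Delta\Phi+\Phi,\lambda\big)=(\lambda,\lambda)=\|\lambda\|^2,
\]
where I have moved the projections onto $\lambda\in W_h$ and used $v_0=Q_0\Phi$ together with the fact that $({\cal Q}_h\Phi-\Phi,\lambda)=0$ and $(\Phi-Q_0\Phi,\lambda)=0$ (noting $W_h$ consists of polynomials of degree $r\le k-1$ and $Q_0$ projects onto $P_k(T)$, so $Q_0\Phi$ and $\Phi$ have the same $L^2$ inner product against $\lambda$). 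This establishes \eqref{inf1}.

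Finally I would bound $\3bar v\3bar$. By the definition \eqref{norm}, I must estimate $s(Q_h\Phi,Q_h\Phi)$ together with the boundary terms $\langle Q_b\Phi,Q_b\Phi\rangle_{\Gamma_O}$ and $\alpha\langle Q_n(\nabla\Phi\cdot\bn),Q_n(\nabla\Phi\cdot\bn)\rangle_{\Gamma_C}$. For the three pieces of $s_T$: the term $h_T^{-3}\|Q_0\Phi-Q_b\Phi\|_{\partial T}^2$ is controlled by writing $Q_0\Phi-Q_b\Phi=(Q_0\Phi-\Phi)+(\Phi-Q_b\Phi)$ and applying the trace inequality \eqref{tracein} together with the approximation estimate \eqref{error1} with $m=k\ge2$, giving $\sum_T h_T^{-3}\|Q_0\Phi-Q_b\Phi\|_{\partial T}^2\lesssim h^{2k-2}\|\Phi\|_k^2\lesssim\|\Phi\|_2^2$; the term $h_T^{-1}\|\nabla Q_0\Phi\cdot\bn-Q_n(\nabla\Phi\cdot\bn)\|_{\partial T}^2$ is handled similarly by inserting $\nabla\Phi\cdot\bn$ and using \eqref{error1}; and $\|Q_0\Phi-{\cal Q}_h Q_0\Phi\|_T^2$ is bounded by $\|Q_0\Phi-{\cal Q}_h\Phi\|_T^2\le 2\|Q_0\Phi-\Phi\|_T^2+2\|\Phi-{\cal Q}_h\Phi\|_T^2$, again controlled via \eqref{error1} and \eqref{error3}. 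The boundary terms on $\Gamma_O$ and $\Gamma_C$ are controlled by the trace inequality \eqref{tracenew} and \eqref{tracein} plus approximation; e.g. $\|Q_b\Phi\|_{\Gamma_O}\lesssim\|\Phi\|_{\partial\Omega}\lesssim\|\Phi\|_1$, which is $\lesssim\|\lambda\|$. Assembling these with the regularity estimate $\|\Phi\|_2\lesssim\|\lambda\|$ yields \eqref{inf2}.

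The main obstacle is the bookkeeping in the stability bound: one must check that every negative power of $h_T$ appearing in $s_T$ is absorbed by a sufficiently high power coming from the approximation estimates \eqref{error1}–\eqref{error3}, which is exactly where the hypothesis $k\ge2$ is used (it ensures $m=k\ge2$ so that $h^{2k-2}\lesssim h^2\lesssim 1$ and no negative power of $h$ survives). A secondary subtlety is confirming that $Q_h\Phi$ indeed lands in $V_h^0$ rather than merely $V_h$, which relies on the homogeneous Neumann condition $\partial_n\Phi=0$ being imposed on all of $\partial\Omega$ in the auxiliary problem, so that the discrete normal component vanishes on $\Gamma_N$.
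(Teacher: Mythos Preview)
Your approach is the same as the paper's: build an auxiliary elliptic problem for $\Phi$ with right-hand side $\lambda$, set $v=Q_h\Phi$, get \eqref{inf1} from the commutative identity \eqref{EQ:CommutativeProperty}, and get \eqref{inf2} from trace and approximation estimates together with the $H^2$-regularity of $\Phi$. The one structural difference is the boundary condition: the paper takes the homogeneous Dirichlet condition $\Phi=0$ on $\partial\Omega$, whereas you take the homogeneous Neumann condition $\partial_n\Phi=0$ on $\partial\Omega$. Your choice is in fact more careful, because membership in $V_h^0$ requires $v_n=Q_n(\partial_n\Phi)=0$ on $\Gamma_N$, which your Neumann condition guarantees directly (and it also kills the $\Gamma_C$ term in $\3bar v\3bar$ for free); with the paper's Dirichlet condition this point is glossed over.

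There is one slip in your stability estimate. You invoke \eqref{error1} with $m=k$, writing $\sum_T h_T^{-3}\|Q_0\Phi-Q_b\Phi\|_{\partial T}^2\lesssim h^{2k-2}\|\Phi\|_k^2\lesssim\|\Phi\|_2^2$, but the auxiliary regularity only gives $\Phi\in H^2(\Omega)$, so $\|\Phi\|_k$ is not available for $k>2$ and the last inequality is unjustified. The fix is to take $m=1$ in \eqref{error1}--\eqref{error3} (which is admissible since $1\le m\le k$), exactly as the paper does: this yields $\sum_T h_T^{-4}\|Q_0\Phi-\Phi\|_T^2+\sum_T h_T^{-2}\|Q_0\Phi-\Phi\|_{1,T}^2+\sum_T\|Q_0\Phi-\Phi\|_{2,T}^2\lesssim\|\Phi\|_2^2$, which absorbs all the negative powers of $h_T$ in $s_T$. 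With that correction your argument goes through; note that the bound on the stabilizer does not actually rely on $k\ge2$ once $m=1$ is used.
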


\begin{proof}  
Let $\Phi$ be the solution of the following auxiliary problem
\begin{equation}\label{auxi}
    \begin{split}
        -\Delta \Phi+\Phi=&\lambda, \qquad\text{in}\ \Omega,\\
        \Phi=&0,\qquad\text{on}\ \partial\Omega.
    \end{split}
\end{equation}
We assume that the auxiliary problem \eqref{auxi} has the following $H^2$- regularity estimate 
\begin{equation}\label{regu1}
    \|\Phi\|_2\lesssim\|\lambda\|.
\end{equation}
As to \eqref{inf1}, letting $v=Q_h \Phi=\{Q_0\Phi,Q_b\Phi, Q_n(\nabla \Phi\cdot \bn)\bn\}$ and using the commutative property \eqref{EQ:CommutativeProperty}, we have
\begin{equation*}
    \begin{split}
        (-\Delta_w v+v_0, \lambda)=& \sum_{T\in {\cal T}_h}  (-\Delta_w Q_h \Phi+Q_0\Phi, \lambda)_T\\
        =&\sum_{T\in {\cal T}_h}  (-  {\cal Q}_h \Delta \Phi+Q_0\Phi, \lambda)_T\\
         =&\sum_{T\in {\cal T}_h}  (-   \Delta \Phi+ \Phi, \lambda)_T\\
         =&\|\lambda\|^2.
    \end{split}
\end{equation*}

Note that when the trace inequality \eqref{tracenew} is applied to the whole domain $\Omega$, the mesh-size is $h={\cal O}(1)$. As to \eqref{inf2}, using the trace inequalities \eqref{tracein}-\eqref{tracenew}, the estimates \eqref{error1}-\eqref{error3}, and the regularity assumption \eqref{regu1}, we have 
\begin{equation*}
    \begin{split}
&  \3bar v\3bar^2 
 =  \3bar Q_h \Phi\3bar^2\\
 =&\sum_{T\in {\cal T}_h}h_T^{-3} \|
 Q_0 \Phi-Q_b \Phi\|^2_{\partial T}+ h_T^{-1} \|\nabla
Q_0 \Phi \cdot \bn-Q_n (\nabla \Phi \cdot\bn)\|^2_{\partial T} \\
&+\|Q_0\Phi-{\cal Q}_h Q_0\Phi\|^2_T+\|Q_b \Phi \|^2_{\Gamma_O}+\alpha\|Q_n(\nabla \Phi\cdot\bn)\|_{\Gamma_C}^2\\
\lesssim &  \sum_{T\in {\cal T}_h} h_T^{-3} \|
 Q_0 \Phi-  \Phi\|^2_{\partial T}+ h_T^{-1} \|\nabla
Q_0 \Phi \cdot \bn-  (\nabla \Phi \cdot\bn)\|^2_{\partial T} \\
&+\|Q_0\Phi-{\cal Q}_h Q_0\Phi\|^2_{T}+ \|Q_b \Phi \|^2+ \|Q_n(\nabla \Phi\cdot\bn)\|^2\\
\lesssim &  \sum_{T\in {\cal T}_h} h_T^{-4} \|
 Q_0 \Phi-  \Phi\|^2_{  T}+h_T^{-2} \|
 Q_0 \Phi-  \Phi\|^2_{1,  T}+  h_T^{-2} \|
 Q_0 \Phi-  \Phi\|^2_{1,  T}\\&+   \| 
Q_0 \Phi  -  \Phi  \|^2_{2, T} +\|Q_0\Phi-{\cal Q}_h Q_0\Phi\|^2_T+ \| \Phi \|^2+\|\nabla \Phi\cdot\bn\|^2\\
\lesssim & (1+h^2)\|\Phi\|_2^2\lesssim \|\lambda\|^2.
    \end{split}
\end{equation*}

This completes the proof of the Lemma.
\end{proof}

\section{Error equations}

 Now let $(u_h;\lambda_h)\in V^0_h \times W_h$
be the numerical solution arising from the weak Galerkin
algorithm (\ref{wg})-\eqref{wg2}. Denote the error
functions by
\begin{align}\label{error}
e_h&=u_h-Q_h u,\\
\epsilon_h&=\lambda_h - {\cal Q}_h \lambda.\label{error-2}
\end{align}

Applying the usual integration by parts to \eqref{weakform}, the Lagrange multiplier $\lambda$ satisfies
\begin{equation}\label{eqnlam}
    \begin{split}
        -\Delta\lambda+\lambda=&0, \ \text{in}\ \Omega,\\
        \nabla\lambda\cdot\bn=&c_0-u,\ \text{on}\ \Gamma_O,\\
        \nabla\lambda\cdot\bn=&0,\ \text{on}\ \partial\Omega \setminus\Gamma_O,\\
        \lambda=&\alpha \partial_n u, \ \text{on}\ \Gamma_C.
    \end{split}
\end{equation}

\begin{lemma}\label{errorequa}
Let $u$ be the solution of the optimal control model problem \eqref{model}-\eqref{model2}  and $(u_h;\lambda_h)\in V^0_h \times W_h$ be
its numerical approximation arising from the weak
Galerkin algorithm (\ref{wg})-\eqref{wg2}. Then, the error functions $e_h$ and $\epsilon_h$ defined in (\ref{error})-(\ref{error-2}) satisfy the following equations
\begin{equation}\label{sehv1}
    \begin{split}
        &s(e_h, v)+\langle e_b, v_b\rangle_{\Gamma_O}+\alpha \langle e_n, v_n\rangle_{\Gamma_C}+(-\Delta_w v+v_0, \epsilon_h)_T
        \\=& \sum_{T\in {\cal T}_h} \langle \nabla v_0\cdot\bn-v_n, \lambda-{\cal Q}_h \lambda\rangle_{\partial T}-\langle v_0-v_b,  (\nabla \lambda-\nabla {\cal Q}_h \lambda) \cdot\bn\rangle_{\partial T} \\
    &   -  (v_0,  {\cal Q}_h \lambda-\lambda)_T+s(Q_hu, v), \qquad \forall v\in V_h^0,
    \end{split}
\end{equation} 
\begin{equation}\label{sehv2}
    (-\Delta_{w,h} e_h+e_0, w) = 0,\qquad\qquad\qquad \forall w\in W_h. 
\end{equation} 
\end{lemma}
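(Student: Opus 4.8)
The plan is to derive the two error equations directly from the weak Galerkin scheme \eqref{wg}-\eqref{wg2} and the variational characterization of the exact solution, by testing the scheme with $v\in V_h^0$ and $w\in W_h$ and then subtracting the contributions coming from the $L^2$ projections $Q_h u$ and ${\mathcal Q}_h\lambda$. The second identity \eqref{sehv2} is the easier of the two, so I would dispatch it first: apply \eqref{wg2} to get $b(u_h,w)=(f,w)$, then use the commutative property \eqref{EQ:CommutativeProperty} together with \eqref{weakform2} (equivalently, $-\Delta u+u=f$ in $\Omega$ and $\Delta_{w,h}(Q_h u)={\mathcal Q}_h(\Delta u)$, plus ${\mathcal Q}_h(u_0\text{-part})$) to see that $b(Q_h u,w)=\sum_T(-{\mathcal Q}_h\Delta u+{\mathcal Q}_h u,w)_T=(f,w)$ for all $w\in W_h$; subtracting gives $(-\Delta_{w,h}e_h+e_0,w)=0$. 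One has to be slightly careful that the ``$v_0$'' term in $b_T$ involves $u_0$ rather than ${\mathcal Q}_h u_0$, but since $w\in W_h$ and ${\mathcal Q}_h$ is the $L^2$ projection onto $W_h$, $(Q_0 u,w)_T=({\mathcal Q}_h Q_0 u,w)_T=({\mathcal Q}_h u,w)_T$, so this is harmless.

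For the first equation \eqref{sehv1}, I would start from \eqref{wg} written for $u_h$, namely
$$
s(u_h,v)+\langle u_b,v_b\rangle_{\Gamma_O}+\alpha\langle u_n,v_n\rangle_{\Gamma_C}+b(v,\lambda_h)=\langle c_0,v_b\rangle_{\Gamma_O},
$$
and subtract the analogous ``projected exact solution'' identity. The key is to show that the exact solution satisfies, for all $v\in V_h^0$,
$$
\langle Q_b u,v_b\rangle_{\Gamma_O}+\alpha\langle Q_n(\nabla u\cdot\bn),v_n\rangle_{\Gamma_C}+b(v,{\mathcal Q}_h\lambda)=\langle c_0,v_b\rangle_{\Gamma_O}+(\text{boundary terms}),
$$
where the boundary terms are exactly the right-hand side of \eqref{sehv1} minus $s(Q_h u,v)$. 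To produce these, the central computation is to expand $b(v,{\mathcal Q}_h\lambda)=\sum_T(-\Delta_w v+v_0,{\mathcal Q}_h\lambda)_T$ using the definition \eqref{2.3} of the weak Laplacian with test function $\lambda$ (legitimate since $\lambda\in H^2$ by the regularity built into \eqref{eqnlam}), insert ${\mathcal Q}_h\lambda=\lambda-(\lambda-{\mathcal Q}_h\lambda)$, and then integrate by parts on each $T$ using \eqref{2.4new}-style manipulations to transfer derivatives onto $\lambda$. This turns $\sum_T(-\Delta_w v+v_0,\lambda)_T$ into $\sum_T(v_0,-\Delta\lambda+\lambda)_T$ plus jump/boundary terms involving $v_b,v_n$ on $\partial T$; using $-\Delta\lambda+\lambda=0$ kills the volume term, the interior jumps of $\nabla\lambda\cdot\bn$ and $\lambda$ vanish because $\lambda\in H^2$ (indeed $\lambda\in C^1$ here), and on $\partial\Omega$ the boundary conditions in \eqref{eqnlam} — $\nabla\lambda\cdot\bn=c_0-u$ on $\Gamma_O$, $\nabla\lambda\cdot\bn=0$ on $\partial\Omega\setminus\Gamma_O$, $\lambda=\alpha\partial_n u$ on $\Gamma_C$, together with $v_n=0$ on $\Gamma_N$ — collapse everything to $\langle c_0-u,v_b\rangle_{\Gamma_O}$ and $-\alpha\langle\partial_n u,v_n\rangle_{\Gamma_C}$. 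Combining with the scheme, adding and subtracting $s(Q_h u,v)$, and recognizing $\langle u-Q_b u,v_b\rangle_{\Gamma_O}=0$ and $\langle \partial_n u-Q_n(\nabla u\cdot\bn),v_n\rangle_{\Gamma_C}=0$ by the defining property of $Q_b,Q_n$ on each edge, yields \eqref{sehv1}.

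The main obstacle — really the only place needing care — is the bookkeeping in the integration-by-parts step for $b(v,{\mathcal Q}_h\lambda)$: one must track the residual terms $\langle v_0-v_b,(\nabla\lambda-\nabla{\mathcal Q}_h\lambda)\cdot\bn\rangle_{\partial T}$, $\langle\nabla v_0\cdot\bn-v_n,\lambda-{\mathcal Q}_h\lambda\rangle_{\partial T}$ and $(v_0,{\mathcal Q}_h\lambda-\lambda)_T$ with correct signs, and correctly sum the $\partial T$ contributions over the partition, converting them into edge sums and using the single-valuedness of $v_b$ and $v_n\bn$ on interior edges (and $v_n|_{\Gamma_N}=0$) to discard the smooth-$\lambda$ jump contributions. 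Since the statement only asks for an identity (no estimates), no trace or approximation inequalities are needed at this stage; the proof is purely algebraic once \eqref{2.3}, \eqref{2.4new}, \eqref{EQ:CommutativeProperty}, and the strong form \eqref{eqnlam} of the adjoint equation are in hand.
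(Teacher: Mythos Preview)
Your proposal is correct and follows essentially the same route as the paper: for \eqref{sehv2} you use \eqref{wg2}, the commutative property \eqref{EQ:CommutativeProperty}, and $-\Delta u+u=f$ exactly as the paper does; for \eqref{sehv1} you expand $b(v,\mathcal{Q}_h\lambda)$ via \eqref{2.4new}, integrate by parts onto $\lambda$, invoke the adjoint system \eqref{eqnlam} and the boundary constraints $v_n|_{\Gamma_N}=0$, and then subtract from \eqref{wg} --- this is precisely the paper's derivation of \eqref{err1}. One small wording issue: your reference to \eqref{2.3} ``with test function $\lambda$'' is not quite right since the scheme uses the discrete $\Delta_{w,h}$ (whose defining identity \eqref{2.4} requires polynomial test functions); the actual mechanism, which you also name, is \eqref{2.4new} with $\varphi=\mathcal{Q}_h\lambda$ followed by $(\Delta v_0,\mathcal{Q}_h\lambda)_T=(\Delta v_0,\lambda)_T$ and classical integration by parts.
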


\begin{proof}
From \eqref{2.4new}, the usual integration by parts, and \eqref{eqnlam}, we obtain
\begin{equation*}
    \begin{split}
 &     \sum_{T\in {\cal T}_h}  (-\Delta_w v+v_0, {\cal Q}_h \lambda)_T
    \\=&  \sum_{T\in {\cal T}_h} (-\Delta v_0, {\cal Q}_h \lambda)_T+\langle v_b-v_0, \nabla {\cal Q}_h \lambda \cdot\bn\rangle_{\partial T}\\&-\langle v_n-\nabla v_0\cdot\bn, {\cal Q}_h \lambda\rangle_{\partial T}+ (v_0,  {\cal Q}_h \lambda)_T \\
    =&  \sum_{T\in {\cal T}_h}  (v_0,-\Delta  \lambda+\lambda)_T -\langle \nabla v_0\cdot\bn-v_n, \lambda\rangle_{\partial T}+\langle v_0-v_b,  \nabla \lambda \cdot\bn\rangle_{\partial T} \\
    &+\langle v_b-v_0, \nabla {\cal Q}_h \lambda \cdot\bn\rangle_{\partial T}-\langle v_n-\nabla v_0\cdot\bn, {\cal Q}_h \lambda\rangle_{\partial T} -\langle  v_n, \lambda\rangle_{\partial T}\\
    &+\langle  v_b,  \nabla \lambda \cdot\bn\rangle_{\partial T}  + (v_0,  {\cal Q}_h \lambda-\lambda)_T \\
    =& \sum_{T\in {\cal T}_h}    -\langle \nabla v_0\cdot\bn-v_n, \lambda-{\cal Q}_h \lambda\rangle_{\partial T}+\langle v_0-v_b,  (\nabla \lambda-\nabla {\cal Q}_h \lambda) \cdot\bn\rangle_{\partial T} \\
    & + (v_0,  {\cal Q}_h \lambda-\lambda)_T-\langle  v_n, \alpha \partial_n u\rangle_{\Gamma_C}+\langle  v_b,  c_0-u\rangle_{\Gamma_O}.  
    \end{split}
\end{equation*}
This gives
\begin{equation}\label{err1}
    \begin{split}
        &\langle Q_bu, v_b\rangle_{\Gamma_O}+\alpha \langle Q_n(\nabla u\cdot\bn), v_n\rangle_{\Gamma_C}+(-\Delta_w v+v_0, {\cal Q}_h \lambda)_T
        \\=& \sum_{T\in {\cal T}_h}    -\langle \nabla v_0\cdot\bn-v_n, \lambda-{\cal Q}_h \lambda\rangle_{\partial T}+\langle v_0-v_b,  (\nabla \lambda-\nabla {\cal Q}_h \lambda) \cdot\bn\rangle_{\partial T} \\
    &  + (v_0,  {\cal Q}_h \lambda-\lambda)_T+\langle  v_b,  c_0 \rangle_{\Gamma_O}.
    \end{split}
\end{equation}
Combining   (\ref{wg}) and \eqref{err1} gives \eqref{sehv1}.

As to  (\ref{sehv2}), we use  (\ref{wg2}),   \eqref{model}, and the
commutative property (\ref{EQ:CommutativeProperty}) to obtain
\begin{eqnarray*}
 (-\Delta_{w,h} e_h+e_0, w) & = & (-\Delta_{w,h}(u_h-Q_hu)+(u_0-Q_0u), w)\\
 &= & (-\Delta_{w,h} u_h+u_0, w) +(\Delta_{w,h} Q_h u-Q_0u, w) \\
 & = & (f, w) + ({\cal Q}_h\Delta u-Q_0u, w) \\
 & = & (f,w) + ( \Delta u-u, w)\\
  & = & (f,w) - (f, w)\\
& = & 0,
\end{eqnarray*}
for all $w\in W_h$. This completes the proof of the lemma.
\end{proof} 

\section{Error estimates}

\begin{theorem} \label{theoestimate}
Let $k \geq 2$. Let $u$ be the exact solution of the optimal control model problem \eqref{model}-\eqref{model2}, and $(u_h;\lambda_h)\in V_h^0\times W_h$
be its numerical approximation arising from the weak
Galerkin algorithm (\ref{wg})-\eqref{wg2}.   Assume that the exact solution is sufficiently regular such  that $u\in
H^{k+1}(\Omega)$ and the Lagrange multiplier satisfies $\lambda\in H^{k-1}(\Omega)$. The following error estimate holds true:
 \begin{equation}\label{erres}
\3bar e_h\3bar+\|\epsilon_h\| \lesssim 
h^{k-1}(\|\lambda\|_{k-1}  +   \|u\|_{k+1}).
\end{equation}
\end{theorem}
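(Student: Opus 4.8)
The plan is to follow the standard energy-argument for mixed-type WG schemes: first derive an intrinsic bound on $\3bar e_h\3bar$ together with $\|\epsilon_h\|$ by testing the error equations \eqref{sehv1}--\eqref{sehv2} against well-chosen functions, then bound each term on the right-hand side of \eqref{sehv1} by projection estimates, and finally invoke the inf-sup condition of Lemma~\ref{leminf} to recover control of $\|\epsilon_h\|$ separately. More precisely, I would first substitute $v=e_h$ into \eqref{sehv1} and $w=\epsilon_h$ into \eqref{sehv2}. Because of the definition \eqref{b-T} of $b_T$, the term $(-\Delta_w e_h + e_0,\epsilon_h)_T$ appearing in \eqref{sehv1} after the choice $v=e_h$ is exactly the left side of \eqref{sehv2} with $w=\epsilon_h$, hence vanishes; this is the key cancellation that produces
\[
\3bar e_h\3bar^2 = \sum_{T\in\T_h}\Big(\langle \nabla e_0\cdot\bn-e_n,\lambda-\Q_h\lambda\rangle_{\partial T} - \langle e_0-e_b,(\nabla\lambda-\nabla\Q_h\lambda)\cdot\bn\rangle_{\partial T} - (e_0,\Q_h\lambda-\lambda)_T\Big) + s(Q_hu,e_h).
\]

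Next I would estimate each of the four groups of terms on the right. For the first two boundary terms I would use Cauchy--Schwarz on $\partial T$, insert the scaling weights $h_T^{-3/2}$ and $h_T^{-1/2}$ so as to recover pieces of $\3bar e_h\3bar$ (from $s_T$), and absorb the complementary factors $h_T^{3/2}\|\lambda-\Q_h\lambda\|_{\partial T}$ and $h_T^{1/2}\|(\nabla\lambda-\nabla\Q_h\lambda)\cdot\bn\|_{\partial T}$; applying the trace inequality \eqref{tracein} followed by the projection estimate \eqref{error3} with $m=k-1$ gives a bound of order $h^{k-1}\|\lambda\|_{k-1}$ for these contributions. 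The volume term $(e_0,\Q_h\lambda-\lambda)_T$ is handled by Cauchy--Schwarz in $L^2(T)$ together with \eqref{error3}. The consistency term $s(Q_hu,v)$ requires bounding $s(Q_hu,Q_hu)^{1/2}$: expanding the three pieces of $s_T$, using the commutative/approximation properties of the projections $Q_0,Q_b,Q_n$, the trace inequalities \eqref{tracein}--\eqref{tracenew}, and \eqref{error1} with $m=k$ yields $s(Q_hu,Q_hu)^{1/2}\lesssim h^{k-1}\|u\|_{k+1}$ (the weight $h_T^{-3}$ on the $(u_0-u_b)$ term is the dominant one, costing one power of $h$ relative to the naive $h^{k}$). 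Combining these with a Young's inequality to absorb the $\3bar e_h\3bar$ factors on the left produces $\3bar e_h\3bar \lesssim h^{k-1}(\|\lambda\|_{k-1}+\|u\|_{k+1})$.

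Finally, to bound $\|\epsilon_h\|$ I would apply Lemma~\ref{leminf} to $\lambda=\epsilon_h\in W_h$, obtaining $v\in V_h^0$ with $(-\Delta_{w,h}v+v_0,\epsilon_h)=\|\epsilon_h\|^2$ and $\3bar v\3bar\lesssim\|\epsilon_h\|$. Plugging this $v$ into \eqref{sehv1} and solving for $\|\epsilon_h\|^2$ expresses it as $\3bar e_h\3bar\,\3bar v\3bar$ plus the same right-hand-side terms already estimated (now tested against this $v$ rather than $e_h$), all of which are $\lesssim (h^{k-1}(\|\lambda\|_{k-1}+\|u\|_{k+1}))\,\3bar v\3bar$; dividing by $\|\epsilon_h\|\gtrsim\3bar v\3bar$ gives the desired bound on $\|\epsilon_h\|$, and the triangle inequality assembles \eqref{erres}.

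The main obstacle I anticipate is the careful bookkeeping of mesh-size powers in the stabilization-consistency term $s(Q_hu,Q_hu)$: one must verify that each of the three components of $s_T$, after inserting the projection-error estimates \eqref{error1} and the trace inequalities, contributes at worst $h^{2(k-1)}\|u\|_{k+1}^2$, and in particular that the strongly-weighted first term $h_T^{-3}\int_{\partial T}(Q_0u-Q_bu)^2\,ds$ does not degrade the rate further — this is where the choice $k\ge 2$ and the specific polynomial degrees in $V(k,T)$ and $W_h$ enter, and it is the step most prone to off-by-one errors in the exponent.
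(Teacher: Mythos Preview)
Your proposal follows essentially the same route as the paper's proof: test \eqref{sehv1} with $v=e_h$, use \eqref{sehv2} to kill the coupling term, bound the four residual pieces with Cauchy--Schwarz plus trace inequalities and the projection estimates \eqref{error1}--\eqref{error3}, and then recover $\|\epsilon_h\|$ via the inf-sup Lemma~\ref{leminf} applied to \eqref{sehv1}. One small point to tighten: for the volume term $\sum_T(e_0,\Q_h\lambda-\lambda)_T$ a direct Cauchy--Schwarz leaves you with $\|e_0\|$, which is \emph{not} controlled by $\3bar e_h\3bar$; the paper (and the design of the stabilizer $s_T$) uses the orthogonality $(\Q_h e_0,\Q_h\lambda-\lambda)_T=0$ to replace $e_0$ by $e_0-\Q_h e_0$, whose $L^2$-norm \emph{is} part of $\3bar e_h\3bar$. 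Also, your listed complementary weights $h_T^{3/2}$ and $h_T^{1/2}$ for the two boundary terms are swapped relative to which piece of $s_T$ they match, but the intended mechanism is clearly the right one.
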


\begin{proof} 
Note that the error function
$e_h  \in V_h^0$. We  take $v=e_h$ in the
error equation (\ref{sehv1}) to obtain
\begin{equation} 
    \begin{split}
        &s(e_h, e_h)+\langle e_b, e_b\rangle_{\Gamma_O}+\alpha \langle e_n, e_n\rangle_{\Gamma_C}+\sum_{T\in {\cal T}_h}(-\Delta_w e_h+e_0, \epsilon_h)_T
        \\=& \sum_{T\in {\cal T}_h}    \langle \nabla e_0\cdot\bn-e_n, \lambda-{\cal Q}_h \lambda\rangle_{\partial T}-\langle e_0-e_b,  (\nabla \lambda-\nabla {\cal Q}_h \lambda) \cdot\bn\rangle_{\partial T} \\
    &   -  (e_0,  {\cal Q}_h \lambda-\lambda)_T+s(Q_hu, e_h). 
    \end{split}
\end{equation} 
 
Note that    (\ref{sehv2}) implies $\sum_{T\in {\cal T}_h}(-\Delta_w e_h+e_0, \epsilon_h)_T=0$. Thus, we obtain
\begin{equation} \label{eh}
    \begin{split}
         \3bar e_h\3bar^2
         =&  \sum_{T\in {\cal T}_h}   \langle \nabla e_0\cdot\bn-e_n, \lambda-{\cal Q}_h \lambda\rangle_{\partial T} \\ & 
 -\langle e_0-e_b,  (\nabla \lambda-\nabla {\cal Q}_h \lambda) \cdot\bn\rangle_{\partial T} \\
    &   -  (e_0,  {\cal Q}_h \lambda-\lambda)_T+s(Q_hu, e_h)\\
    =&I_1+I_2+I_3+I_4.
    \end{split}
\end{equation} 

We shall estimate the four terms in the last line of \eqref{eh}. 
As to $I_1$, from the Cauchy-Schwarz inequality, the trace
inequality (\ref{tracein}) and the estimate (\ref{error3}) we have
\begin{equation}\label{i1}
    \begin{split}
   I_1\leq &\sum_{T\in {\cal T}_h}    |\langle \nabla e_0\cdot\bn-e_n, \lambda-{\cal Q}_h \lambda\rangle_{\partial T}|\\
   \lesssim & (\sum_{T\in {\cal T}_h}h_T^{-1} \| \nabla e_0\cdot\bn-e_n\|^2_{\partial T})^{\frac{1}{2}}(\sum_{T\in {\cal T}_h}h_T \| \lambda-{\cal Q}_h \lambda\|^2_{\partial T})^{\frac{1}{2}}\\
\lesssim & \3bar e_h\3bar (\sum_{T\in {\cal T}_h}  \| \lambda-{\cal Q}_h \lambda\|^2_{T}+h_T^2 \| \lambda-{\cal Q}_h \lambda\|^2_{1, T})^{\frac{1}{2}}\\
\lesssim & h^{k-1}\|\lambda\|_{k-1}\3bar e_h\3bar.
    \end{split}
\end{equation}

As to $I_2$, from the Cauchy-Schwarz inequality, the trace
inequality (\ref{tracein}) and the estimate (\ref{error3}) we have
\begin{equation}\label{i2}
    \begin{split}
  I_2\leq &\sum_{T\in {\cal T}_h}    |\langle e_0-e_b,  (\nabla \lambda-\nabla {\cal Q}_h \lambda) \cdot\bn\rangle_{\partial T}|\\
  \lesssim & (\sum_{T\in {\cal T}_h}h_T^{-3} \| e_0-e_b\|^2_{\partial T})^{\frac{1}{2}}(\sum_{T\in {\cal T}_h}h_T^3 \| (\nabla \lambda-\nabla {\cal Q}_h \lambda)\|^2_{\partial T})^{\frac{1}{2}}\\
\lesssim & \3bar e_h\3bar(\sum_{T\in {\cal T}_h}  h_T^2 \| \nabla \lambda-\nabla {\cal Q}_h \lambda\|^2_T+h_T^4  \|\nabla \lambda-\nabla {\cal Q}_h \lambda\|^2_{1, T})^{\frac{1}{2}}\\
 \lesssim & h^{k-1}\|\lambda\|_{k-1}\3bar e_h\3bar.
    \end{split}
\end{equation}

As to $I_3$, from the Cauchy-Schwarz inequality, and the estimate (\ref{error3}), we have
\begin{equation}\label{i4}
    \begin{split}
  I_3\leq &\sum_{T\in {\cal T}_h}    | (e_0, {\cal Q}_h\lambda-\lambda)|\\
  = &\sum_{T\in {\cal T}_h}    | (e_0-{\cal Q}_he_0, {\cal Q}_h\lambda-\lambda)|\\
    \lesssim & h^{k-1}\|\lambda\|_{k-1}\3bar e_h\3bar.
    \end{split}
\end{equation}

Recall that \begin{equation}\label{sterm}
\begin{split}
I_4=&s(Q_hu,e_h)\\
= &\sum_{T\in {\cal T}_h}h_T^{-3} \langle
 Q_0u-Q_bu, e_0-e_b\rangle _{\partial T}  \\
 &+ h_T^{-1} \langle  \nabla Q_0u \cdot \bn-Q_n(\nabla u \cdot \bn), \nabla  e_0\cdot\bn -e_n\rangle _{\partial T}\\&+(Q_0u-{\cal Q}_h Q_0u,  e_0-{\cal Q}_h  e_0)_T\\
 =&J_1+J_2+J_3.
\end{split}
\end{equation}
From the Cauchy-Schwarz inequality, the trace
inequality (\ref{tracein}) and the estimate (\ref{error1}) we have
\begin{equation}\label{sterm1}
\begin{split}
  J_1\leq &\Big| \sum_{T\in {\cal T}_h}h_T^{-3} \langle
 Q_0u-Q_bu, e_0-e_b\rangle _{\partial T} \Big|\\
\lesssim & \Big(  \sum_{T\in {\cal T}_h}h_T^{-3}\|
 Q_0u- Q_bu\|^2_{\partial T}\Big)^{\frac{1}{2}}  \Big(  \sum_{T\in {\cal T}_h}h_T^{-3}\|  e_0-e_b\|^2_{\partial T}\Big)^{\frac{1}{2}}  \\
\lesssim & \Big(  \sum_{T\in {\cal T}_h}h_T^{-4}\|
 Q_0u- u\|^2_{T}+h_T^{-2}\|
 Q_0u- u\|^2_{1, T}\Big)^{\frac{1}{2}} \3bar e_h\3bar \\
 \lesssim & h^{k-1} \|u\|_{k+1} \3bar  e_h\3bar.
 \end{split}
\end{equation}
Analogously,  $J_2$ and $J_3$
can be bounded as follows 
\begin{equation} 
\begin{split}
J_2\leq &\Big| \sum_{T\in {\cal T}_h}h_T^{-1} \langle\nabla Q_0u \cdot \bn-Q_n(\nabla u \cdot \bn), \nabla  e_0\cdot\bn -e_n\rangle _{\partial T}\Big|\\
\lesssim&h^{k-1} \|u\|_{k+1}  \3bar  e_h\3bar,
\end{split}
\end{equation} and
\begin{equation} \label{sterm2}
\begin{split}
J_3\leq  \Big| \sum_{T\in {\cal T}_h}(Q_0u-{\cal Q}_h Q_0u,  e_0-{\cal Q}_h  e_0)_T \Big| 
\lesssim h^{k-1} \|u\|_{k-1}  \3bar  e_h\3bar.
\end{split}
\end{equation}

Substituting (\ref{sterm1})-(\ref{sterm2}) into  (\ref{sterm})
gives
\begin{equation}\label{EQ:Estimate_4_RHT}
I_4\leq\Big| s(Q_hu,e_h)\Big|  \lesssim  h^{k-1}   \|u\|_{k+1} \3bar  e_h\3bar.
\end{equation}

Substituting the estimates \eqref{i1}, \eqref{i2}, \eqref{i4} and \eqref{EQ:Estimate_4_RHT} into  \eqref{eh} gives

\begin{equation}\label{ehess}
\3bar e_h\3bar \lesssim  h^{k-1}\|\lambda\|_{k-1} +h^{k-1}   \|u\|_{k+1}.   
\end{equation}

From the inf-sup condition (\ref{inf1})-\eqref{inf2}, 
for any $\epsilon_h \in W_h$, there exists  a weak function $v\in V_h^0$ satisfying
\begin{align}\label{inf3}
 (-\Delta_{w,h} v+v_0, \epsilon_h )&  = \|\epsilon_h \|^2,  \\
 \3bar v\3bar  & \lesssim \|\epsilon_h \|.\label{inf4}
\end{align}

On the other hand, the error equation \eqref{sehv1} implies
\begin{equation}\label{sehv3}
    \begin{split}
        &(-\Delta_w v+v_0, \epsilon_h)_T
        \\=& \sum_{T\in {\cal T}_h}    \langle \nabla v_0\cdot\bn-v_n, \lambda-{\cal Q}_h \lambda\rangle_{\partial T} \\
     & \
   -\langle v_0-v_b,  (\nabla \lambda-\nabla {\cal Q}_h \lambda) \cdot\bn\rangle_{\partial T} \\
    & \  - (v_0,  {\cal Q}_h \lambda-\lambda)_T+s(Q_hu, v)-(s(e_h, v)\\
    & \ +\langle e_b, v_b\rangle_{\Gamma_O}+\alpha \langle e_n, v_n\rangle_{\Gamma_C}).
    \end{split}
\end{equation} 
 
Similar to the estimate of $J_1$, we have
\begin{equation}\label{b1}
  |\sum_{T\in {\cal T}_h}    \langle \nabla v_0\cdot\bn-v_n, \lambda-{\cal Q}_h \lambda\rangle_{\partial T} |
  \lesssim h^{k-1} \|\lambda\|_{k-1}\3bar v\3bar.
\end{equation}

\begin{equation}\label{b2}
  |\sum_{T\in {\cal T}_h}    \langle v_0-v_b,  (\nabla \lambda-\nabla {\cal Q}_h \lambda) \cdot\bn\rangle_{\partial T} |
  \lesssim h^{k-1} \|\lambda\|_{k-1}\3bar v\3bar.
\end{equation}

\begin{equation}\label{b3}
\begin{split}
    |\sum_{T\in {\cal T}_h}   (v_0,  {\cal Q}_h \lambda-\lambda)_T | &=  |\sum_{T\in {\cal T}_h}   (v_0-{\cal Q}_h v_0,  {\cal Q}_h \lambda-\lambda)_T |\\
    & 
  \lesssim h^{k-1} \|\lambda\|_{k-1}\3bar v\3bar. 
\end{split}
\end{equation}

Similar to \eqref{EQ:Estimate_4_RHT}, we have
\begin{equation}\label{b4}
\Big| s(Q_hu,v)\Big|  \lesssim  h^{k-1}   \|u\|_{k+1}  \3bar  v\3bar.
\end{equation}

Using Cauchy-Schwartz inequality gives
\begin{equation}\label{b5}
 |s(e_h, v)+\langle e_b, v_b\rangle_{\Gamma_O}+\alpha \langle e_n, v_n\rangle_{\Gamma_C} |\lesssim\3bar e_h\3bar \3bar v\3bar .
\end{equation}

 Substituting \eqref{b1}-\eqref{b5} into \eqref{sehv3} and using \eqref{inf3}-\eqref{inf4} yields 
 \begin{equation*}
     \begin{split}
\|\epsilon_h\|^2&\lesssim 
(h^{k-1}\|\lambda\|_{k-1} +h^{k-1}   \|u\|_{k+1} +\3bar e_h\3bar) \3bar v\3bar\\
&\lesssim 
(h^{k-1}\|\lambda\|_{k-1} +h^{k-1}   \|u\|_{k+1} +\3bar e_h\3bar) \|\epsilon_h\|,  \end{split}
 \end{equation*}
  which, together with   \eqref{ehess}, leads to
  \begin{equation*}
     \begin{split}
\|\epsilon_h\| \lesssim 
h^{k-1}\|\lambda\|_{k-1} +h^{k-1}   \|u\|_{k+1}. 
\end{split}
 \end{equation*}
  
  This
completes the proof of the theorem.
\end{proof}

\section{Error Estimates in a weak $L^2$ topology}

To establish an error estimate for the WG scheme \eqref{wg}-\eqref{wg2} in a $L^2$-related topology,  we consider the dual problem of seeking $\Phi$ satisfying
\begin{equation}\label{dual}
    \begin{split}
    -\Delta \Phi+\Phi =&e_0, \qquad\text{in}\ \Omega, \\ 
        \Phi =&0, \qquad \text{on}\ \Gamma_C, \\  
        \nabla \Phi\cdot\bn =&0, \qquad \text{on} \ \partial\Omega.    
    \end{split}
\end{equation} 
        
We assume the solution of the dual problem \eqref{dual} has $H^2$ regularity in the sense that
\begin{equation}\label{regula}
    \|\Phi\|_2\lesssim\|e_0\|.
\end{equation}

\begin{theorem} Let $k\geq 2$. Recall that $r=k-1$ or $r=k-2$. We take $r=1$ for the lowest order $k=2$. Let $u$ be the exact solution of the optimal control model problem \eqref{model}-\eqref{model2}, and $(u_h;\lambda_h)\in V_h^0\times W_h$
be its numerical approximation arising from the weak
Galerkin algorithm (\ref{wg})-\eqref{wg2}. Assume that the exact solution is sufficiently regular in the sense that $u\in
H^{k+1}(\Omega)$ and the dual variable satisfies $\lambda\in  H^{k-1}(\Omega)$. Assume that the dual problem \eqref{dual} has the $H^2$ regularity property \eqref{regula}.
Then, the following error estimate holds true
    \begin{equation}
        \|e_0\|\lesssim
h^{k+1}(\|\lambda\|_{k-1} +  \|u\|_{k+1}).
    \end{equation}
\end{theorem}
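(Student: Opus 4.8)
The plan is to run a standard Aubin--Nitsche duality argument adapted to the weak Galerkin setting, testing the dual problem \eqref{dual} against the primal error $e_h$ and using the already-established energy estimate \eqref{ehess} (and the $\|\epsilon_h\|$ bound) to absorb the resulting lower-order terms. First I would set $v=Q_h\Phi$ in the primal error equation \eqref{sehv1}, where $\Phi$ solves the dual problem. Because $-\Delta\Phi+\Phi=e_0$ and the commutative property \eqref{EQ:CommutativeProperty} gives $\Delta_{w,h}(Q_h\Phi)={\cal Q}_h(\Delta\Phi)$, the term $(-\Delta_w v+v_0,\epsilon_h)_T$ should reproduce essentially $(e_0,\epsilon_h)$ up to projection commutators; and, crucially, using \eqref{sehv2} with $w=$ an appropriate projection of $e_0$ (or directly observing that $\Delta_{w,h}e_h$ and $e_0$ lie in $W_h$) lets me convert $\langle\cdots\rangle$ pairings into $\|e_0\|^2$ after integration by parts via \eqref{2.4new}. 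The boundary terms $\langle e_b,v_b\rangle_{\Gamma_O}$, $\alpha\langle e_n,v_n\rangle_{\Gamma_C}$, and $s(e_h,v)$ need to be handled by the trace inequalities \eqref{tracein}--\eqref{tracenew} together with the projection estimates \eqref{error1}--\eqref{error3} applied to $\Phi$, invoking the $H^2$-regularity \eqref{regula}: each such factor carries an $h^2$ gain from $\|\Phi-Q_0\Phi\|$-type bounds against the already-known $O(h^{k-1})$ size of $\3bar e_h\3bar$.

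More concretely, the key steps in order are: (i) write out $\3bar e_h\3bar^2$-style identity from \eqref{sehv1} with $v=Q_h\Phi$, isolating $\|e_0\|^2$ plus error terms; (ii) bound the four "$\lambda-{\cal Q}_h\lambda$" boundary terms on the right of \eqref{sehv1} by Cauchy--Schwarz, trace inequalities, and \eqref{error3}, getting $h^{k-1}\|\lambda\|_{k-1}$ times $\3bar Q_h\Phi\3bar\lesssim h\|\Phi\|_2\lesssim h\|e_0\|$ — note the extra $h$ because $\3bar Q_h\Phi\3bar$ with $\Phi\in H^2$ and the interior/boundary scalings yields an $O(h)$ bound rather than $O(1)$, which is exactly what upgrades $h^{k-1}$ to $h^{k}$, and a further $h$ is harvested from pairing against $e_h$ via $s(e_h,\cdot)$ and the boundary terms using \eqref{ehess}; (iii) bound $s(Q_hu,Q_h\Phi)$ by Cauchy--Schwarz in the $s(\cdot,\cdot)$ seminorm, giving $h^{k-1}\|u\|_{k+1}\cdot\3bar Q_h\Phi\3bar\lesssim h^{k}\|u\|_{k+1}\|e_0\|$, and similarly for the $s(e_h,Q_h\Phi)+\langle e_b,\cdots\rangle+\alpha\langle e_n,\cdots\rangle$ group, which is $\lesssim \3bar e_h\3bar\,\3bar Q_h\Phi\3bar\lesssim h^{k-1}\cdot h\|e_0\|$; (iv) handle the $(-\Delta_w v+v_0,\epsilon_h)$ term: after the commutative property and \eqref{2.4new}, this equals $(e_0,\epsilon_h)$ plus projection-commutator boundary terms of order $h^{k+1}$ or better (since $\epsilon_h=\lambda_h-{\cal Q}_h\lambda$ is controlled by \eqref{erres}), and then use \eqref{sehv2} to rewrite $(e_0,\epsilon_h)$ — carefully, since $\epsilon_h\in W_h$, \eqref{sehv2} gives $(\Delta_{w,h}e_h,\epsilon_h)=(e_0,\epsilon_h)$, so this contributes $(\Delta_{w,h}e_h,\epsilon_h)$ which must itself be estimated by $\|\Delta_{w,h}e_h\|\|\epsilon_h\|$ or, better, folded back through the error equation; (v) collect all bounds, absorb the $\|e_0\|$ factors, and conclude $\|e_0\|\lesssim h^{k+1}(\|\lambda\|_{k-1}+\|u\|_{k+1})$.

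The main obstacle I anticipate is the bookkeeping around the term $(-\Delta_w (Q_h\Phi)+Q_0\Phi,\epsilon_h)_T$: one must show it produces $\|e_0\|^2$ as the leading contribution while all deviations — the difference between $Q_0\Phi$ and ${\cal Q}_h\Phi$ inside the zeroth-order term, the boundary commutators between $Q_b\Phi,Q_n(\nabla\Phi\cdot\bn)$ and the traces of $\Phi$, and the coupling to $\epsilon_h$ via \eqref{sehv2} — are genuinely $O(h^{k+1})$. This requires combining \eqref{EQ:CommutativeProperty}, \eqref{2.4new}, the trace inequality \eqref{tracein} with the full $H^2$-regularity of $\Phi$, and the \emph{sharp} form of the projection estimates \eqref{error1}--\eqref{error3} (using $m=k$ for $u$-terms, $m=k-1$ for $\lambda$-terms, and small $m$ for $\Phi$-terms), and it is where the choice $r=1$ when $k=2$ matters so that ${\cal Q}_h$ has enough approximation power for the zeroth-order consistency term. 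Once that term is correctly reduced, the remaining pieces are routine applications of the machinery already developed in Sections 5--7.
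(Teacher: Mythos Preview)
Your plan has a structural gap: testing \eqref{sehv1} with $v=Q_h\Phi$ never produces $\|e_0\|^2$. As you yourself note in step~(iv), the term $(-\Delta_w(Q_h\Phi)+Q_0\Phi,\epsilon_h)$ equals $(e_0,\epsilon_h)$ (via the commutative property \eqref{EQ:CommutativeProperty} and $-\Delta\Phi+\Phi=e_0$), not $\|e_0\|^2$. Rewriting $(e_0,\epsilon_h)=(\Delta_{w,h}e_h,\epsilon_h)$ through \eqref{sehv2} does not help --- you end up estimating a cross term, not the quantity you want. Nowhere in your outline does the square $\|e_0\|^2$ actually appear as a left-hand side to be bounded.

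The second gap is the claim that $\3bar Q_h\Phi\3bar\lesssim h\|\Phi\|_2$. This is false: the stabilizer part $s(Q_h\Phi,Q_h\Phi)$ is only $O(\|\Phi\|_2^2)$, exactly as computed in the proof of Lemma~\ref{leminf} (the inf-sup condition), where one shows $\3bar Q_h\Phi\3bar^2\lesssim(1+h^2)\|\Phi\|_2^2$. The scaling $h_T^{-3}\|Q_0\Phi-Q_b\Phi\|_{\partial T}^2$ saturates at $O(\|\Phi\|_{2,T}^2)$ when $\Phi$ is merely in $H^2$. Consequently $s(e_h,Q_h\Phi)$ is bounded by $\3bar e_h\3bar\cdot O(1)\lesssim h^{k-1}\|e_0\|$, two powers of $h$ short. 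The same deficit afflicts the $\langle e_b,Q_b\Phi\rangle_{\Gamma_O}$ term unless $\Gamma_O\subset\Gamma_C$ (so that $\Phi|_{\Gamma_O}=0$), which is not assumed.

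The paper's argument avoids both issues by working the duality from the other side: it pairs the dual equation directly with $e_0$ to obtain $\|e_0\|^2=\sum_T(-\Delta\Phi+\Phi,e_0)_T$, integrates by parts, and then uses \eqref{2.4new} with $v=e_h$ and $\varphi={\cal Q}_h\Phi\in W_h$ to introduce $(-\Delta_{w,h}e_h+e_0,{\cal Q}_h\Phi)$, which vanishes by \eqref{sehv2}. Only \eqref{sehv2} is used; \eqref{sehv1}, $\epsilon_h$, and the $\lambda$--consistency terms never enter the duality identity. What remains are three commutator terms of the form $\langle e_0-e_b,\nabla({\cal Q}_h\Phi-\Phi)\cdot\bn\rangle_{\partial T}$, $\langle\nabla e_0\cdot\bn-e_n,{\cal Q}_h\Phi-\Phi\rangle_{\partial T}$, and $(\Phi-{\cal Q}_h\Phi,e_0-{\cal Q}_he_0)_T$, each of which carries a genuine factor $h^2\|\Phi\|_2$ from the approximation of $\Phi$ by ${\cal Q}_h\Phi$ (here the requirement $r\ge 1$ when $k=2$ is used). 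This yields $\|e_0\|\lesssim h^2\3bar e_h\3bar$, and \eqref{erres} closes the argument.
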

\begin{proof}
    Testing \eqref{dual} by $e_0$ gives
    \begin{equation}\label{eq2}
        \begin{split}
            &\quad \ \|e_0\|^2\\
  &= \sum_{T\in {\cal T}_h}(-\Delta\Phi+\Phi, e_0)_T\\
            &=\sum_{T\in {\cal T}_h} -(\Phi, \Delta e_0)_T
 +\langle \Phi, \nabla e_0\cdot\bn \rangle_{\partial T}-\langle \nabla \Phi\cdot\bn, e_0\rangle_{\partial T}+(\Phi, e_0)_T\\
            & = \sum_{T\in {\cal T}_h} -({\cal Q}_h\Phi, \Delta e_0)_T+\langle \Phi, \nabla e_0\cdot\bn-e_n\rangle_{\partial T}\\
    & \qquad -\langle \nabla \Phi\cdot\bn, e_0-e_b\rangle_{\partial T} +(\Phi, e_0)_T,
        \end{split}
    \end{equation}
    where we used the usual integration by parts, the fact that \ 
   $\sum_{T\in {\cal T}_h}  \langle \Phi,  e_n\rangle_{\partial T}$ $
 = \langle \Phi,    e_n\rangle_{\partial \Omega}=0$ since  $e_n=0$ on $\Gamma_N$ and $\Phi=0$ on $\Gamma_C$, and the fact that $\sum_{T\in {\cal T}_h} \langle \nabla \Phi\cdot\bn, e_b\rangle_{\partial T}= \langle \nabla \Phi\cdot\bn, e_b\rangle_{\partial \Omega}=0$ since $\nabla\Phi\cdot\bn=0$ on $\partial\Omega$. 

Letting $\varphi={\cal Q}_h\Phi$ and $v=e_h$ in \eqref{2.4new} gives
     \begin{equation} \label{eq1}
     \begin{split}
         (\Delta _{w} e_h, {\cal Q}_h\Phi)_T&=(\Delta e_0,  {\cal Q}_h\Phi)_T+
 \langle e_0- e_b,\nabla {\cal Q}_h\Phi\cdot \bn \rangle_{\partial T}\\&-
 \langle  \nabla e_0\cdot \bn - e_n,{\cal Q}_h\Phi \rangle_{\partial T}.
     \end{split}
 \end{equation}

Substituting \eqref{eq1} into \eqref{eq2} gives
\begin{equation}\label{a3}
    \begin{split}
 & \|e_0\|^2 \\
 =&\sum_{T\in {\cal T}_h} - (\Delta _{w} e_h, {\cal Q}_h\Phi)_T +
 \langle e_0- e_b,\nabla {\cal Q}_h\Phi\cdot \bn \rangle_{\partial T}\\
 &-
 \langle  \nabla e_0\cdot \bn - e_n,{\cal Q}_h\Phi \rangle_{\partial T} +\langle \Phi, \nabla e_0\cdot\bn-e_n\rangle_{\partial T}\\&-\langle \nabla \Phi\cdot\bn, e_0-e_b\rangle_{\partial T}+(\Phi, e_0)_T\\
 =&\sum_{T\in {\cal T}_h}     (-\Delta _{w} e_h+e_0, {\cal Q}_h \Phi)_T  +
 \langle e_0- e_b,\nabla ({\cal Q}_h\Phi-\Phi)\cdot \bn \rangle_{\partial T}\\
 &-
 \langle  \nabla e_0\cdot \bn - e_n,{\cal Q}_h\Phi-\Phi \rangle_{\partial T}+(\Phi-{\cal Q}_h\Phi, e_0)_T\\
 =&\sum_{T\in {\cal T}_h}  
 \langle e_0- e_b,\nabla ({\cal Q}_h\Phi-\Phi)\cdot \bn \rangle_{\partial T}\\
  & \ -
 \langle  \nabla e_0\cdot \bn - e_n,{\cal Q}_h\Phi-\Phi \rangle_{\partial T}
  + (\Phi-{\cal Q}_h\Phi, e_0)_T,  
    \end{split}
\end{equation}
 where we used the error equation \eqref{sehv2}. 

 Using the Cauchy-Schwartz inequality, the trace inequality \eqref{tracein}, \eqref{error3}, and the regularity assumption \eqref{regula}, we have
 \begin{equation}\label{a1}
     \begin{split}
  &  |\sum_{T\in {\cal T}_h}  
 \langle e_0- e_b,\nabla ({\cal Q}_h\Phi-\Phi)\cdot \bn \rangle_{\partial T}|\\
 \lesssim& \Big(  \sum_{T\in {\cal T}_h}  h_T^{-3}\|e_0- e_b\|^2_{\partial T} \Big)^{\frac{1}{2}
 }\Big(  \sum_{T\in {\cal T}_h} h_T^3\|\nabla ({\cal Q}_h\Phi-\Phi)\cdot \bn \|^2_{\partial T}\Big)^{\frac{1}{2}}\\
\lesssim & \3bar e_h\3bar \Big(  \sum_{T\in {\cal T}_h} h_T^2\|\nabla ({\cal Q}_h\Phi-\Phi)\cdot \bn \|^2_{ T}+h_T^4\|\nabla ({\cal Q}_h\Phi-\Phi)\cdot \bn \|^2_{ 1, T}\Big)^{\frac{1}{2}}\\
\lesssim &   h^2 \3bar e_h\3bar  \|\Phi\|_2 \\
\lesssim &  h^2 \3bar e_h\3bar  \|e_0\|.
    \end{split}
 \end{equation} 

 Similarly, \ using the Cauchy-Schwarz inequality, \ the trace inequality \eqref{tracein}, \eqref{error3}, and the regularity assumption \eqref{regula}, we have
 \begin{equation} 
     \begin{split}
  &  |\sum_{T\in {\cal T}_h}  
  \langle  \nabla e_0\cdot \bn - e_n,{\cal Q}_h\Phi-\Phi \rangle_{\partial T}|\\
 \lesssim &   h^2 \3bar e_h\3bar  \|e_0\|.
    \end{split}
 \end{equation} 

Using the Cauchy-Schwarz inequality, \eqref{error3}, we have
\begin{equation}\label{a2}
    \begin{split}
&       \sum_{T\in {\cal T}_h}  (\Phi-{\cal Q}_h\Phi, e_0)_T\\
=& \sum_{T\in {\cal T}_h}  (\Phi-{\cal Q}_h\Phi, e_0-{\cal Q}_h e_0)_T\\
 \lesssim & \Big(  \sum_{T\in {\cal T}_h}   \|\Phi-{\cal Q}_h\Phi\|^2_{T} \Big)^{\frac{1}{2}}\Big(  \sum_{T\in {\cal T}_h}  \|e_0-{\cal Q}_h e_0\|^2_{ T}\Big)^{\frac{1}{2}}\\
 \lesssim &  h^2\|\Phi\|_{2}\3bar e_h\3bar\\
 \lesssim & h^2\|e_0\|\3bar e_h\3bar.
    \end{split}
\end{equation}

 Substituting \eqref{a1}-\eqref{a2} into \eqref{a3} gives 
 $$
  \|e_0\|^2\lesssim  h^2\3bar e_h\3bar\|e_0\|,
 $$
 which, together with \eqref{erres},  completes the proof of the theorem.
 
\end{proof}

\section{Numerical Experiments}\label{Section:NE}
 In this section, several numerical experiments will be implemented to verify the convergence theory established in previous sections. 

In the first numerical example,  we solve the model problem \eqref{model}-\eqref{model2} on the
unit square domain $\Omega=(0,1)\times(0,1)$.
We let $\Gamma_C=\Gamma_O=\{0\}\times(0,1)\subset \partial \Omega$ and
   $\Gamma_N=\partial \Omega \setminus \Gamma_C$.
We choose the functions in  \eqref{model}--\eqref{model2} as follows: 
\an{\label{f-1}\ad{ f(x,y) &=-(2\pi^2+1) \cos(\pi x)\cos(\pi y)+( \pi^2+1)\cos(\pi y), \\
                q(x,y) &=0, \\
                c_0(x,y)&= 0. } }
            In this case, the optimal control solution is,
   independent of $\alpha$ in \eqref{model2},
\an{\label{s1} u(x,y) &= (1-\cos(\pi x)) \cos(\pi y). }

We apply the WG finite element method \eqref{wg}--\eqref{wg2} to approximate the
  solution \eqref{s1}.
We adopt uniform triangular meshes, as shown in Figure \ref{grid1}.
The computational errors for this problem are listed in Tables \ref{t1}-\ref{t3} when different degrees of polynomial are employed.
Roughly, the optimal orders of convergence are achieved in all cases, verifying the
  theory established in the previous sections.
In all cases, a larger $\alpha$ produces a better solution.

\begin{figure}[ht]
 \begin{center} \setlength\unitlength{1.25pt}
\begin{picture}(260,80)(0,0)
  \def\tr{\begin{picture}(20,20)(0,0)\put(0,0){\line(1,0){20}}\put(0,20){\line(1,0){20}}
          \put(0,0){\line(0,1){20}} \put(20,0){\line(0,1){20}}
   \put(0,20){\line(1,-1){20}}   \end{picture}}
 {\setlength\unitlength{5pt}
 \multiput(0,0)(20,0){1}{\multiput(0,0)(0,20){1}{\tr}}}

  {\setlength\unitlength{2.5pt}
 \multiput(45,0)(20,0){2}{\multiput(0,0)(0,20){2}{\tr}}}

  \multiput(180,0)(20,0){4}{\multiput(0,0)(0,20){4}{\tr}}

 \end{picture}\end{center}
\caption{The first three levels of triangular grids used in computation.}
\label{grid1}
\end{figure}
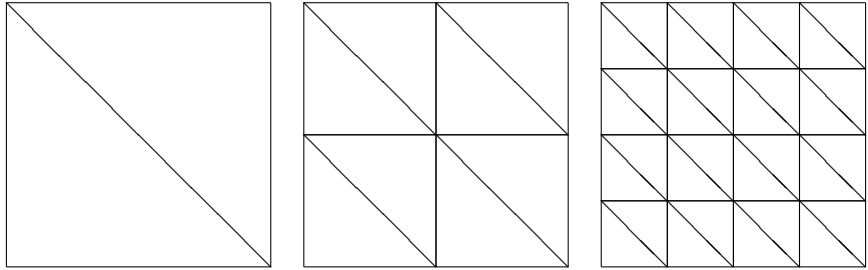
 
\begin{table}[ht]
  \centering \renewcommand{\arraystretch}{1.1}
  \caption{Error profiles on grids shown in Figure \ref{grid1}, for \eqref{s1}. }
\label{t1}
\begin{tabular}{c|cc|cc}
\hline
level & $\3bar Q_h u - u_h \3bar $  &order &  $ \|Q_0 u- u_h  \|_0  $ &order    \\
\hline
 &\multicolumn{4}{c}{by the $P_3$-$P_1$ WG finite element, $\alpha=10^{-6}$. } \\ \hline
 2&    0.260E+00 &  1.9&    0.205E-02 &  3.8 \\
 3&    0.658E-01 &  2.0&    0.169E-03 &  3.6 \\
 4&    0.165E-01 &  2.0&    0.324E-04 &  2.4 \\
\hline
 &\multicolumn{4}{c}{by the $P_3$-$P_1$ WG finite element, $\alpha=10^{0}$. } \\ \hline
 2&    0.260E+00 &  1.9&    0.207E-02 &  3.8 \\
 3&    0.657E-01 &  2.0&    0.129E-03 &  4.0 \\
 4&    0.165E-01 &  2.0&    0.133E-04 &  3.3 \\
\hline
 &\multicolumn{4}{c}{by the $P_3$-$P_1$ WG finite element, $\alpha=10^{6}$. } \\ \hline
 3&    0.358E+00 &  4.0&    0.139E-03 &  4.0 \\
 4&    0.275E-01 &  3.7&    0.867E-05 &  4.0 \\
 5&    0.436E-02 &  2.7&    0.541E-06 &  4.0 \\
 \hline
\end{tabular}%
\end{table}%

\begin{table}[ht]
  \centering \renewcommand{\arraystretch}{1.1}
  \caption{Error profiles on grids shown in Figure \ref{grid1}, for \eqref{s1}. }
\label{t2}
\begin{tabular}{c|cc|cc}
\hline
level & $\3bar Q_h u - u_h \3bar $  &order &  $ \|Q_0 u- u_h  \|_0  $ &order    \\
\hline
 &\multicolumn{4}{c}{by the $P_4$-$P_2$ WG finite element, $\alpha=10^{-6}$. } \\ \hline
 2&    0.309E-01 &  2.9&    0.803E-04 &  4.9 \\
 3&    0.391E-02 &  3.0&    0.274E-05 &  4.9 \\
 4&    0.544E-03 &  2.8&    0.420E-06 &  2.7 \\
\hline
 &\multicolumn{4}{c}{by the $P_4$-$P_2$ WG finite element, $\alpha=10^{0}$. } \\ \hline
 2&    0.309E-01 &  2.9&    0.841E-04 &  4.9 \\
 3&    0.391E-02 &  3.0&    0.368E-05 &  4.5 \\
 4&    0.540E-03 &  2.9&    0.263E-06 &  3.8 \\
\hline
 &\multicolumn{4}{c}{by the $P_4$-$P_2$ WG finite element, $\alpha=10^{6}$. } \\ \hline
 2&    0.700E+00 &  4.9&    0.845E-04 &  4.9 \\
 3&    0.225E-01 &  5.0&    0.274E-05 &  4.9 \\
 4&    0.877E-03 &  4.7&    0.870E-07 &  5.0 \\
 \hline
\end{tabular}%
\end{table}%

\begin{table}[ht]
  \centering \renewcommand{\arraystretch}{1.1}
  \caption{Error profiles on grids shown in Figure \ref{grid1}, for \eqref{s1}. }
\label{t3}
\begin{tabular}{c|cc|cc}
\hline
level & $\3bar Q_h u - u_h \3bar $  &order &  $ \|Q_0 u- u_h  \|_0  $ &order    \\
\hline
 &\multicolumn{4}{c}{by the $P_5$-$P_3$ WG finite element, $\alpha=10^{-6}$. } \\ \hline
 1&    0.443E-01 &  0.0&    0.327E-03 &  0.0 \\
 2&    0.289E-02 &  3.9&    0.511E-05 &  6.0 \\
 3&    0.233E-03 &  3.6&    0.589E-06 &  3.1 \\
\hline
 &\multicolumn{4}{c}{by the $P_5$-$P_3$ WG finite element, $\alpha=10^{0}$. } \\ \hline
 1&    0.445E-01 &  0.0&    0.371E-03 &  0.0 \\
 2&    0.288E-02 &  3.9&    0.496E-05 &  6.2 \\
 3&    0.229E-03 &  3.7&    0.761E-07 &  6.0 \\
\hline
 &\multicolumn{4}{c}{by the $P_5$-$P_3$ WG finite element, $\alpha=10^{6}$. } \\ \hline
 1&    0.446E+01 &  0.0&    0.373E-03 &  0.0 \\
 2&    0.731E-01 &  5.9&    0.497E-05 &  6.2 \\
 3&    0.118E-02 &  6.0&    0.662E-07 &  6.2 \\
 \hline
\end{tabular}%
\end{table}%

In the second numerical example,  we solve \eqref{model}--\eqref{model2} on the
unit square domain $\Omega=(0,1)\times(0,1)$ again.
We let $\Gamma_C=\Gamma_O=\{0\}\times(0,1)\subset \partial \Omega$ and
   $\Gamma_N=\partial \Omega \setminus \Gamma_C$.
We choose the functions in  \eqref{model}--\eqref{model2} as follows:
\a{ f(x,y) &=\Big( \frac{5\pi^2}4 + 1 \Big) \sin\frac{\pi x}2 \cos(\pi y), \\
                q(x,y) &=0, \\
                c_0(x,y)&= 0. }
In this example, the optimal-control solution for each $\alpha$ is unknown.
But when $\alpha\to 0$, we know the solution is
\an{\label{s2} \lim_{\alpha\to 0} u(x,y) &= \cos\frac{\pi(x-1)}2 \cos(\pi y). }
Again, the computation is done on the grids shown in Figure \ref{grid1}.
For small $\alpha$ cases,  we compare the numerical solution with the limit solution
   \eqref{s2} and list the errors in Table \ref{t4}.
For larger $\alpha$,  we do not have an exact solution to compare with.
We simply plot these solutions in Figure \ref{solution2}. 
We can see the surfaces become flat when $\alpha$ is getting big.

\begin{table}[ht]
  \centering \renewcommand{\arraystretch}{1.1}
  \caption{Error profiles on grids shown in Figure \ref{grid1}, for \eqref{s2}. }
\label{t4}
\begin{tabular}{c|cc|cc}
\hline
level & $\3bar Q_h u - u_h \3bar $  &order &  $ \|Q_0 u- u_h  \|_0  $ &order    \\
\hline
 &\multicolumn{4}{c}{by the $P_3$-$P_1$ WG finite element, $\alpha=10^{-8}$. } \\ \hline
 3&    0.214E-01 &  2.0&    0.117E-03 &  2.7 \\
 4&    0.534E-02 &  2.0&    0.258E-04 &  2.2 \\
 5&    0.134E-02 &  2.0&    0.534E-05 &  2.3 \\
\hline
 &\multicolumn{4}{c}{by the $P_4$-$P_2$ WG finite element, $\alpha=10^{-8}$. } \\ \hline
 1&    0.576E-01 &  0.0&    0.537E-03 &  0.0 \\
 2&    0.743E-02 &  3.0&    0.186E-04 &  4.8 \\
 3&    0.940E-03 &  3.0&    0.206E-05 &  3.2 \\
 \hline
\end{tabular}%
\end{table}%

\begin{figure}[ht]
 \begin{center} \setlength\unitlength{1in}
\begin{picture}(4.6,4.5)(0,0)

\put(0,-1.5){\includegraphics[width=4.7in]{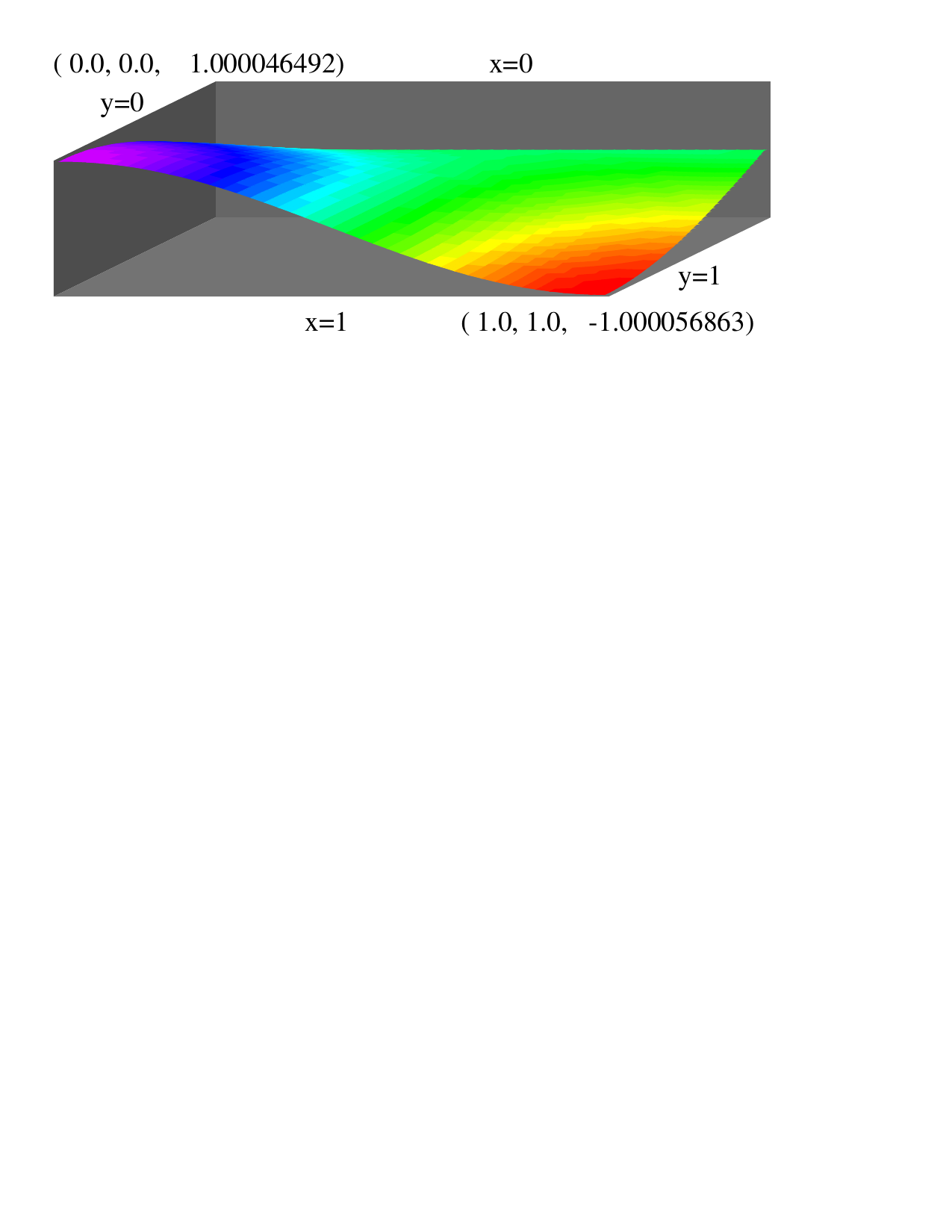}} 
\put(0,-3){\includegraphics[width=4.7in]{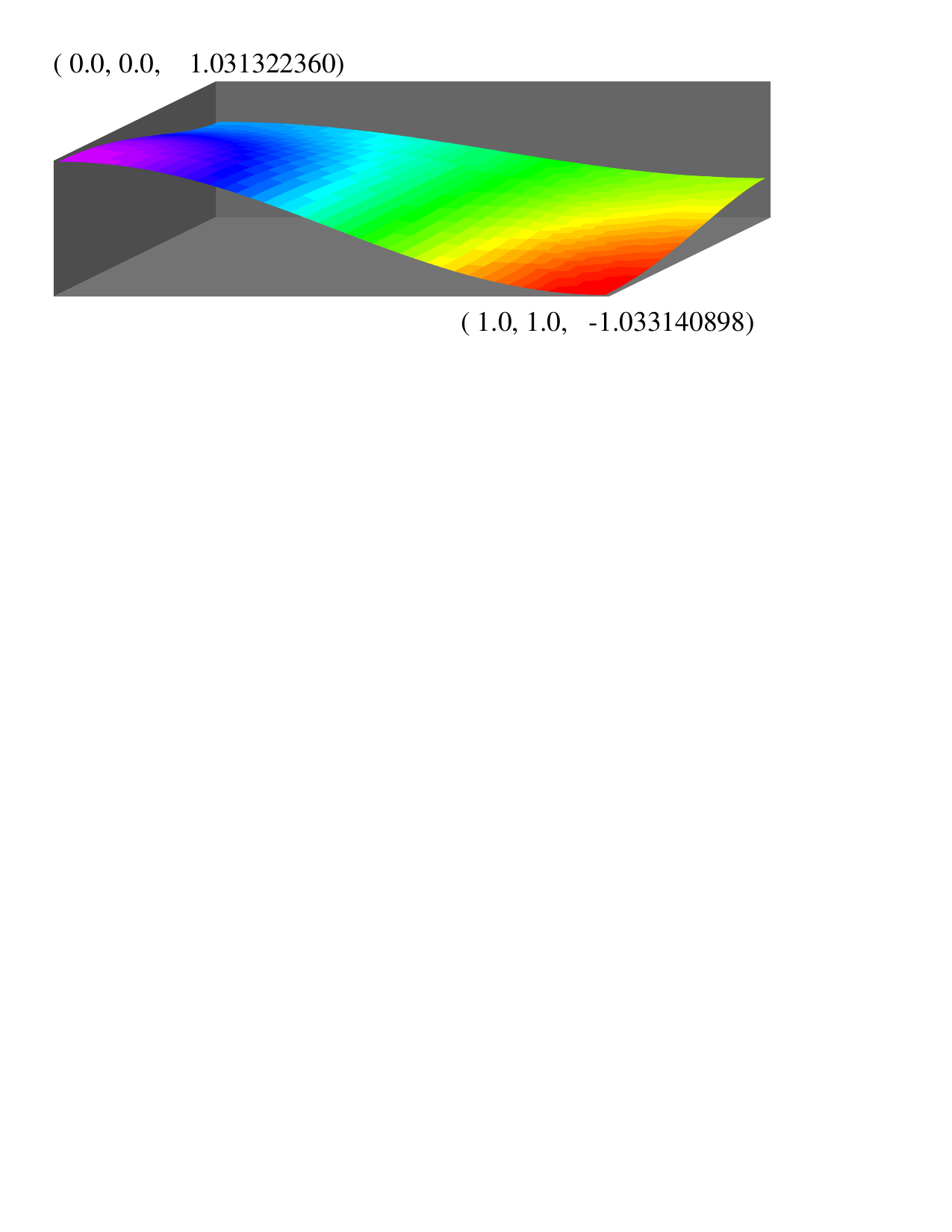}} 
\put(0,-4.5){\includegraphics[width=4.7in]{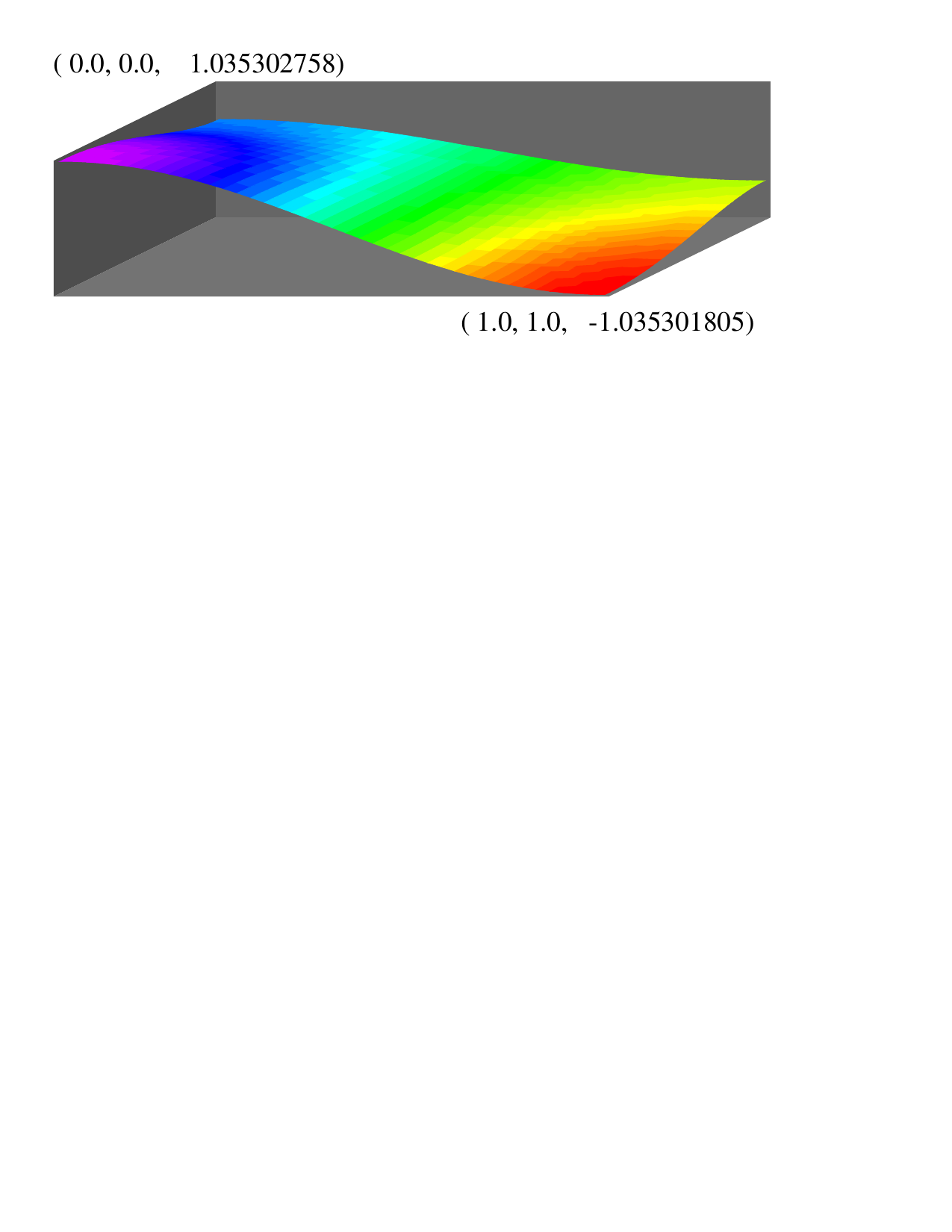}} 

 \end{picture}\end{center} 
\caption{The numerical optimal-solutions in the second example, for
   $\alpha=10^{-4}$ (top), $1$ (middle) and $10^4$ (bottom).}\label{solution2}.
\end{figure}

In the last numerical example, we solve \eqref{model}--\eqref{model2} on the
unit square domain $\Omega=(0,1)\times(0,1)$, with
\a{ \Gamma_C&=\Gamma_O =\{0\}\times(0,1)\subset \partial \Omega, \\
    \Gamma_N&=\partial \Omega \setminus \Gamma_C . }
We choose the functions in  \eqref{model}--\eqref{model2} as follows.  
\a{ f(x,y) &=-16 y^4+32 y^3+176 y^2-192 y+32, \\
                q(x,y) &=0, \\
                c_0(x,y)&= 0. }
We do not know the optimal-control solution for general $\alpha$.
But when $\alpha\to \infty$, we know the solution is
\an{\label{s3} \lim_{\alpha\to 0} u(x,y) &= -16 y^2 (1-y)^2. }
The computation is done on the grids shown in Figure \ref{grid1}.
For a large $\alpha$,  we compare the numerical solution with the limit solution
   \eqref{s3} and list the errors in Table \ref{t5}.
For a small $\alpha$,  we do not have an exact solution to compare with.
We plot these solutions in Figure \ref{solution3}. 
We can see the surfaces become flat in $y$ direction
      when $\alpha$ is getting big.

\begin{table}[ht]
  \centering \renewcommand{\arraystretch}{1.1}
  \caption{Error profiles on grids shown in Figure \ref{grid1}, for \eqref{s3}. }
\label{t5}
\begin{tabular}{c|cc|cc}
\hline
level & $\3bar Q_h u - u_h \3bar $  &order &  $ \|Q_0 u- u_h  \|_0  $ &order    \\
\hline
 &\multicolumn{4}{c}{by the $P_3$-$P_1$ WG finite element, $\alpha=10^{9}$. } \\ \hline
 3&    0.742E-01 &  2.0&    0.929E-04 &  3.9 \\
 4&    0.186E-01 &  2.0&    0.583E-05 &  4.0 \\
 5&    0.464E-02 &  2.0&    0.363E-06 &  4.0 \\
\hline
 &\multicolumn{4}{c}{by the $P_4$-$P_2$ WG finite element, $\alpha=10^{9}$. } \\ \hline
 2&    0.176E-01 &  2.7&    0.254E-03 &  4.6 \\
 3&    0.238E-02 &  2.9&    0.976E-05 &  4.7 \\
 4&    0.307E-03 &  3.0&    0.329E-06 &  4.9 \\
 \hline
\end{tabular}%
\end{table}%

\begin{figure}[ht]
 \begin{center} \setlength\unitlength{1in}
\begin{picture}(4.6,4.5)(0,0)

\put(0,-1.5){\includegraphics[width=4.7in]{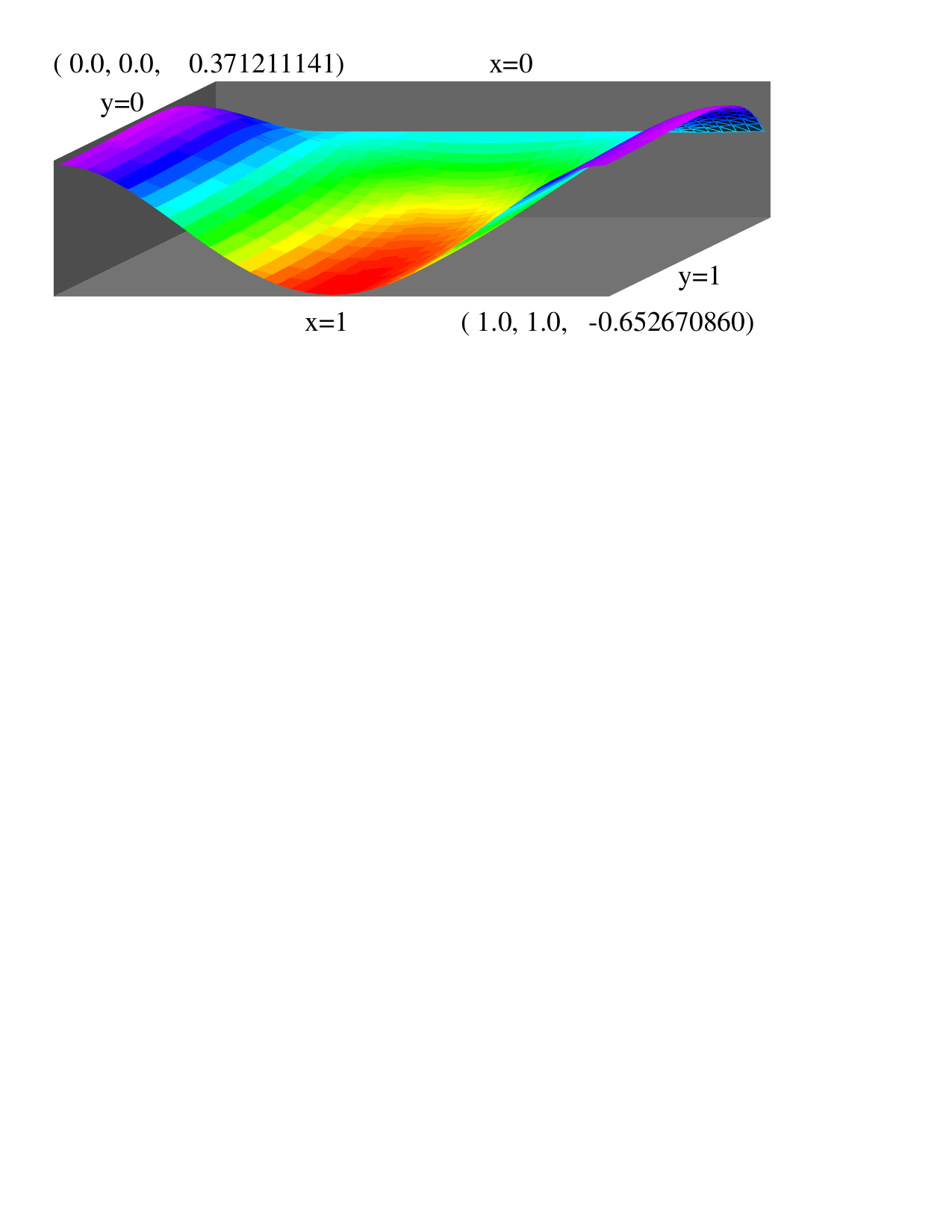}} 
\put(0,-3){\includegraphics[width=4.7in]{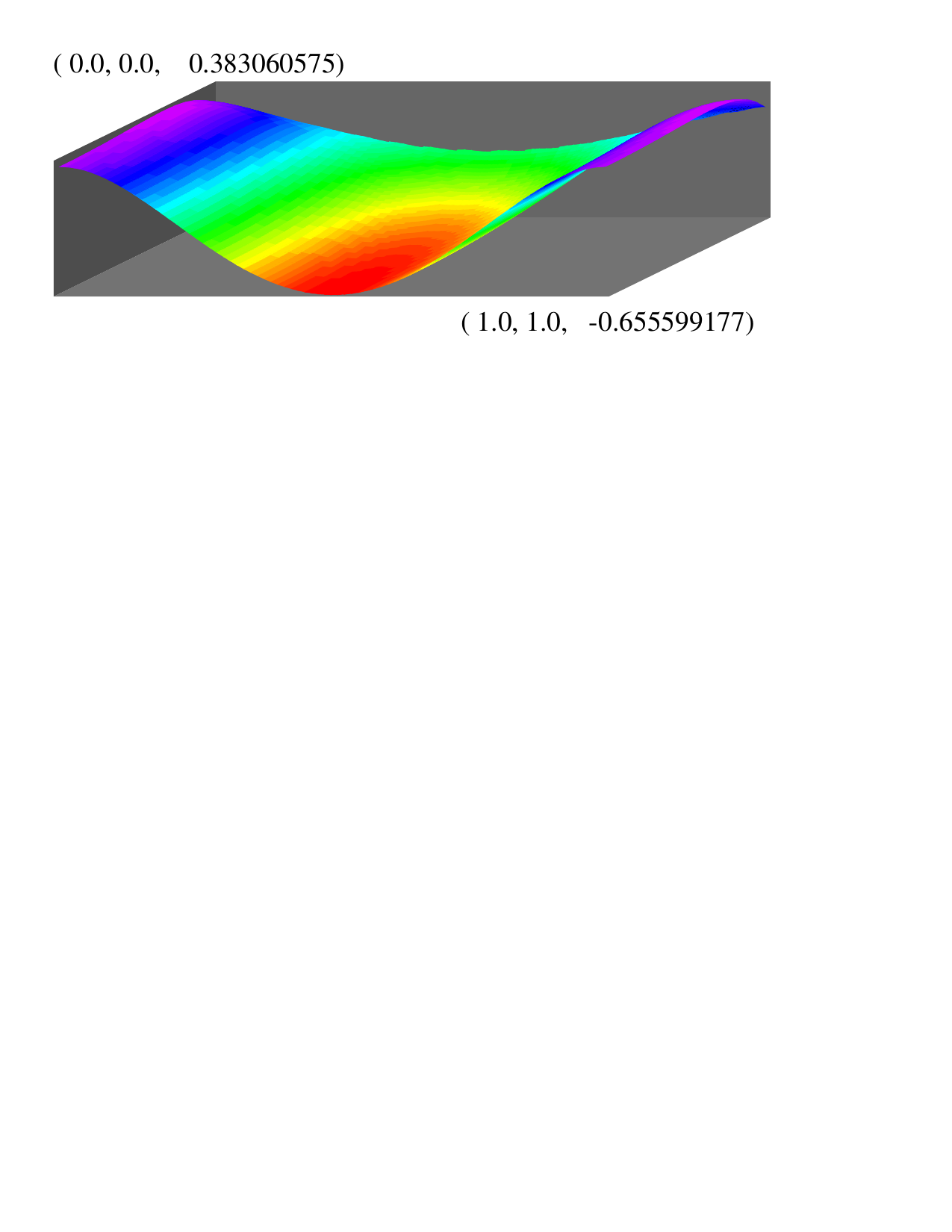}} 
\put(0,-4.5){\includegraphics[width=4.7in]{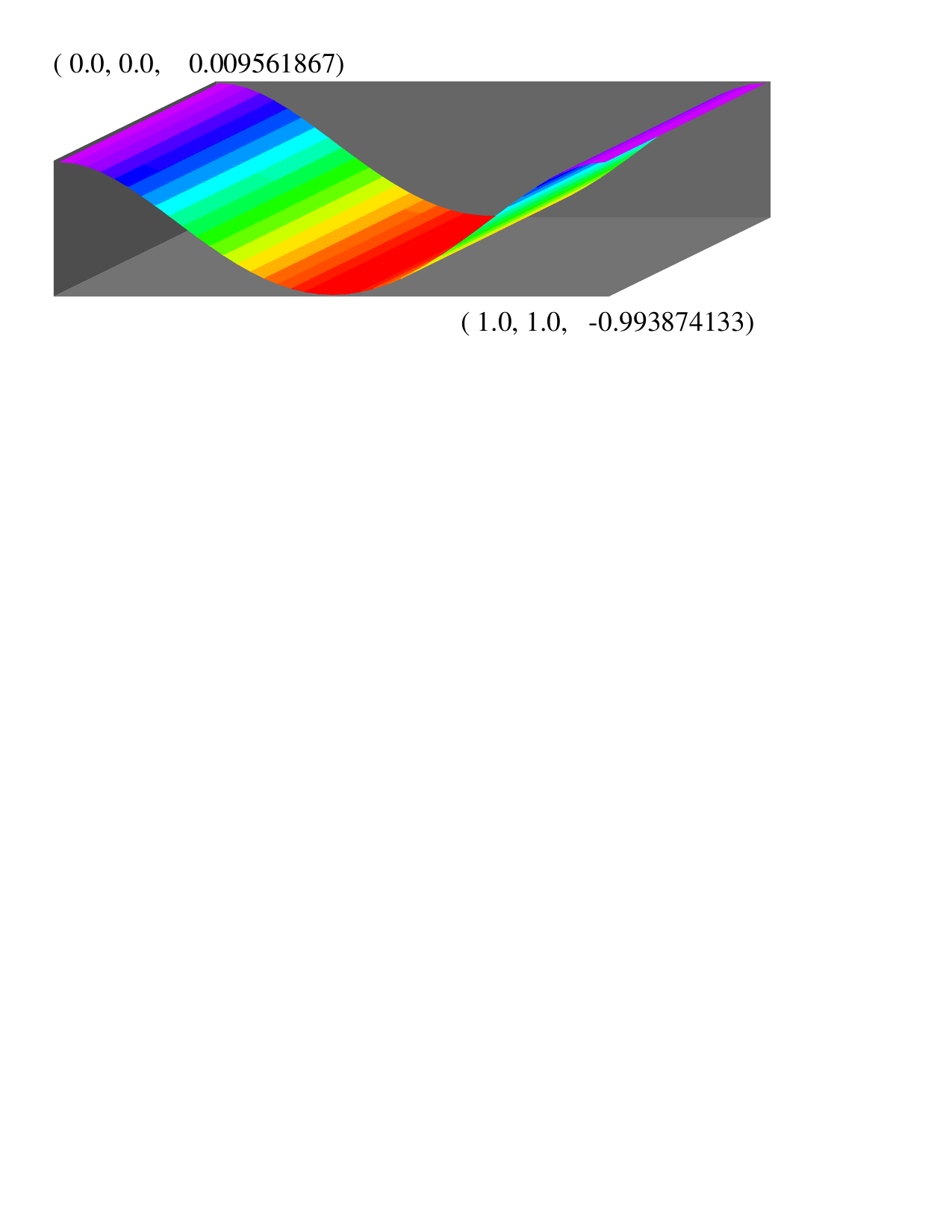}} 

 \end{picture}\end{center} 
\caption{The numerical optimal-solutions in the third example \eqref{s3}, for
   $\alpha=10^{-9}$ (top), $10^{-2}$ (middle) and $10^2$  (bottom).}\label{solution3}.
\end{figure}


\begin{thebibliography}{99}
\bibitem{boppss} {\sc S Brenner, M Oh, S Pollock, K Porwal,   M Schedensack, N Sharma}, {\em A C0 interior penalty
method for elliptic distributed optimal control problems in three dimensions with pointwise state constraints. Topics in Numerical Partial Differential Equations and Scientific Computing (S. Brenner ed.)}, The IMA Volumes in Mathematics and its Applications, vol. 160. Cham-Heidelberg-New York-Dordrecht-London: Springer, pp. 1–22, 2016.


\bibitem{bs} {\sc S Brenner, L Sung}, {\em  A new convergence analysis of finite element methods for elliptic distributed optimal control problems with pointwise state constraints}, SIAM J. Control Optim., vol. 55, pp. 2289–2304, 2017.


\bibitem{bsg} {\sc S Brenner, L Sung, J  Gedicke}, {\em 
P1 finite element methods for an elliptic optimal control problem with pointwise state constraints},  
IMA Journal of Numerical Analysis, vol. 40, pp. 1-28, 2020.  

\bibitem{bsz} {\sc S Brenner, L Sung, Y Zhang},  {\em  A quadratic C0 interior penalty method for an elliptic optimal control problem with state constraints. Recent Developments in Discontinuous Galerkin Finite Element Methods for Partial Differential Equations}, 2012 John H. Barrett Memorial Lectures (O. K. X. Feng and Y. Xing eds), The IMA Volumes in Mathematics and its Applications, vol. 157. Cham-Heidelberg-New York- Dordrecht-London: Springer, pp. 97–132, 2013.

\bibitem{bsz2} {\sc S Brenner, L Sung, Y Zhang}, {\em   Post-processing procedures for a quadratic C0 interior penalty method for elliptic distributed optimal control problems with pointwise state constraints}, Appl. Numer. Math., vol. 95, 99–117, 2015.

\bibitem{bsz3} {\sc S Brenner, L Sung, Y Zhang}, {\em C0 interior penalty methods for an elliptic state- constrained optimal control problem with Neumann boundary condition}, Preprint J. Comput. Appl. Math., 2018b.



\bibitem{cmv} {\sc E Casas, M Mateos, B Vexler}, {\em New regularity results and improved error estimates for optimal control problems with state constraints}, ESAIM Control Optim. Calc. Var., vol. 20, pp. 803–822, 2014.

\bibitem{dh} {\sc K Deckelnick, M Hinze},  {\em Convergence of a finite element approximation to a state-constrained elliptic control problem}, SIAM J. Numer. Anal., vol. 45, pp. 1937–1953, 2007.

\bibitem{gt}
{\sc D Gilbarg, N Trudinger}, {\em   Elliptic Partial Differential Equations of Second Order}, second ed., Springer-Verlag, Berlin, 1983.

\bibitem{wy} {\sc W Gong, N Yan},{\em   A mixed finite element scheme for optimal control problems with pointwise state constraints}, J. Sci. Comput., vol. 46, pp. 182–203, 2011.

\bibitem{hpuu} {\sc M Hinze, R Pinnau, M Ulbrich, S Ulbrich}, {\em   Optimization with PDE Constraints},  New York: Springer, 2009.

\bibitem{lgy} {\sc W Liu, W Gong, N Yan}, {\em   A new finite element approximation of a state-constrained optimal control problem}, J. Comput. Math., vol. 27, pp. 97–114, 2009.

  

\bibitem{meyer}
{\sc C Meyer}, {\em    Error estimates for the finite-element approximation of an elliptic control problem with pointwise state and control constraints},  Control Cybernet., vol. 37, pp. 51–83, 2008.

\bibitem{npr} {\sc I Neitzel, J  Pfefferer,  A Rosch},   \  {\em Finite element discretization of state-constrained elliptic optimal control problems with semilinear state equation}, SIAM J. Control Optim.,vol. 53, pp. 874–904, 2015.

 
 

 

 \bibitem{wy3655}  {\sc J. Wang and X. Ye}, {\em
A weak Galerkin mixed finite element method for second-order elliptic problems}, Math. Comp., vol. 83, pp. 2101-2126, 2014.

\bibitem{wang}
{\sc	C. Wang and J. Wang}, {\em Primal-Dual Weak Galerkin Finite Element Methods for Elliptic Cauchy Problems}, Computers and Mathematics with Applications, vol 79(3), pp. 746-763, 2020.
\end{thebibliography}
\end{document}